\documentclass[12pt]{amsart}
\usepackage{epsfig}
\usepackage{graphics}
\usepackage{color}
\usepackage{dcpic, pictexwd}
\usepackage{tikz}
\usepackage[utf8]{inputenc}
\usepackage{amsmath}
\usepackage{amsfonts}
\usepackage{amssymb}
\usepackage{calligra}
\usepackage{calrsfs}
\usepackage[mathscr]{euscript}
\usepackage{pinlabel}

\newtheorem{theorem}{Theorem}

\newtheorem{lemma}[theorem]{Lemma}

\newtheorem{corollary}[theorem]{Corollary}

\theoremstyle{definition}

\theoremstyle{remark}


 \def\R{{\mathbb{R}}}
 \def\Z{{\mathbb{Z}}}

 \def\S{{\Sigma}}

\def\mod{{\rm Mod}}

 \def\SL{{\rm SL}}
 \def\Sp{{\rm Sp}}

 \def\G{{\mathcal{G}}}
\def\S{{\mathcal{S}}}

 \begin{document}

\newenvironment{prooff}{\medskip \par \noindent {\it Proof}\ }{\hfill
$\square$ \medskip \par}
    \def\sqr#1#2{{\vcenter{\hrule height.#2pt
        \hbox{\vrule width.#2pt height#1pt \kern#1pt
            \vrule width.#2pt}\hrule height.#2pt}}}
    \def\square{\mathchoice\sqr67\sqr67\sqr{2.1}6\sqr{1.5}6}
\def\pf#1{\medskip \par \noindent {\it #1.}\ }
\def\endpf{\hfill $\square$ \medskip \par}
\def\demo#1{\medskip \par \noindent {\it #1.}\ }
\def\enddemo{\medskip \par}
\def\qed{~\hfill$\square$}

 \title[Mapping class group is generated by three involutions]
{Mapping class group is generated by three involutions}

\author[Mustafa Korkmaz]{Mustafa Korkmaz}
\date{\today}

 \address{Department of Mathematics, Middle East Technical University, 06800
  Ankara, Turkey
  }
 \address{Department of Mathematics and Statistics, UMASS-Amherst, Amherst MA, 01003
  USA
  }
 \email{korkmaz@metu.edu.tr}
\begin{abstract}
We prove that the mapping class group of a closed connected orientable surface of genus at least eight
is generated by three involutions.
\end{abstract} 
 \maketitle

\section{Introduction}
The mapping class group $\mod(\Sigma_g)$ of a closed connected orientable 
surface $\Sigma_g$ is defined as the group of isotopy classes of 
orientation--preserving self--diffeomorphisms of $\Sigma_g$. We are interested in
generating $\mod(\Sigma_g)$ by the least number of involutions. 

The group $\mod(\Sigma_g)$ cannot be generated by two 
involutions, because, for example, it contains nonabelian free groups. Thus any generating set 
consisting of involutions must contain at least three elements.
The purpose of this paper is to prove that the mapping class
group can be generated generated by three involutions 
if the genus is at least eight, answering a question in~\cite{BrendleFarb}.

\begin{theorem} \label{thm:1}
The mapping class group $\mod(\Sigma_g)$ is generated by 
three involutions if $g\geq 8$, and by four involutions if $g \geq 3$. 
\end{theorem}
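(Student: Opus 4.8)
The plan is to reduce the problem to a statement about two generators and then realize those generators as products of involutions, using the elementary but decisive observation that an element $w$ of any group is a product of two involutions precisely when it is \emph{reversible}, i.e.\ when some involution $\sigma$ satisfies $\sigma w \sigma^{-1} = w^{-1}$. Indeed, if $\sigma w \sigma = w^{-1}$ with $\sigma^2 = 1$, then $w = \sigma \cdot (\sigma w)$ and $(\sigma w)^2 = \sigma w \sigma w = w^{-1} w = 1$, so $\sigma$ and $\sigma w$ are the two involutions; conversely $w = \sigma_1 \sigma_2$ gives $\sigma_1 w \sigma_1 = \sigma_2 \sigma_1 = w^{-1}$. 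This is the engine that will convert expressions for a generating pair into counts of involutions.

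First I would fix a generating set of $\mod(\Sigma_g)$ well adapted to a symmetry. The natural choice is a Humphries-type chain $c_1, \dots, c_{2g+1}$, whose Dehn twists generate $\mod(\Sigma_g)$. I would then produce a finite-order ``rotation'' $R$ that cyclically moves the curves of a filling configuration, so that conjugation by powers of $R$ carries a single Dehn twist $t$ onto the whole family $\{R^k t R^{-k}\}$. Because these conjugates recover the chain twists, one obtains $\mod(\Sigma_g) = \langle R, t\rangle$; this is the mechanism behind the known two-generator presentations, and it turns a question about all of $\mod(\Sigma_g)$ into a question about two elements.

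Next I would build the involutions. A rotation $R$ arising from a symmetric model of $\Sigma_g$ (for instance a regular-polygon or necklace-of-handles picture) is reversed by a reflection-type involution, so $R = \sigma_1\sigma_2$ with $\sigma_2 = \sigma_1 R$. Choosing a second reversible generator $S$ with $\langle R, S\rangle = \mod(\Sigma_g)$ and writing $S = \sigma_3\sigma_4$ already yields generation by four involutions, valid once $g \geq 3$. To descend to three, the key is to \emph{share} a middle involution: I would arrange $x = \sigma_1\sigma_2$ and $y = \sigma_2\sigma_3$, which forces $\sigma_2 = \sigma_1 x$ to reverse $y$ in addition to satisfying $\sigma_1 x \sigma_1 = x^{-1}$. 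Granting such a configuration, $\langle \sigma_1,\sigma_2,\sigma_3\rangle \supseteq \langle x, y\rangle = \mod(\Sigma_g)$, so three involutions suffice.

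The main obstacle, and the source of the hypothesis $g \geq 8$, is the simultaneous constraint in the last step: one must exhibit two explicit periodic generators $x, y$ together with a single involution $\sigma_1$ reversing $x$ for which the \emph{forced} involution $\sigma_2 = \sigma_1 x$ also reverses $y$, while still verifying that $x$ and $y$ generate $\mod(\Sigma_g)$ rather than a proper subgroup. Making the geometric model symmetric enough to supply these reflections, yet leaving enough handles to spread a twist across a filling chain, is precisely what breaks down for small genus; for $3 \leq g \leq 7$ the construction only delivers the weaker pairing into four involutions. I expect the verification that the three chosen involutions generate—tracking the Dehn twists produced by conjugation and checking that they contain a Humphries generating set—to be the most delicate part of the argument.
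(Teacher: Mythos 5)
Your reversibility engine is exactly the paper's Lemma~\ref{lem:invol} (if $\rho x\rho = y$ then $\rho xy^{-1}$ is an involution, i.e.\ $\rho$ reverses $xy^{-1}$), and your three-involution scheme --- recover $x$ and $y$ from $\sigma_1,\sigma_2,\sigma_3$ with a shared reverser --- matches the paper's architecture in spirit: there $R=\rho_1\rho_2$, and the third involution is $\rho_3F_1$ where $\rho_3=R^2\rho_1R^{-2}$ already lies in $\langle\rho_1,\rho_2\rangle$, so $F_1$ is recovered. But your pivotal step, ``fix a generating pair and make both generators reversible,'' is precisely where the known inputs fail, and the paper says so explicitly. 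The two-generator sets you invoke (Wajnryb, Korkmaz) have the element of order $4g+2$ as one generator, and that element is \emph{not} a product of two orientation-preserving involutions: by Bujalance--Cirre--Gamboa--Gromadzki, $\mod(\Sigma_g)$ contains no dihedral subgroup of order $8g+4$. The other generator in those pairs is (or involves) a single Dehn twist, which is \emph{never} reversible by an orientation-preserving involution, since $ft_af^{-1}=t_{f(a)}$ is again a right twist. If instead you take the order-$g$ rotation $R=\rho_1\rho_2$ from the symmetric model, then conjugates $R^kt R^{-k}$ of one twist sweep only a single $R$-orbit of curves, which does not fill; the paper's Corollary~\ref{cor:Dehn-Lick} needs three twists $A_1,B_1,C_1$ alongside $R$, and no two-element generating set with $R$ as one member is established anywhere. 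So both your four-involution and three-involution claims rest on generation statements that are unproven and, for the natural candidates, provably false.

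What actually closes this gap in the paper is the construction of \emph{new} generating sets whose non-rotation generators are products of right and left twists about disjoint curves --- the configuration to which Lemma~\ref{lem:invol} applies. Theorem~\ref{thm:thm4} shows $R, A_1A_2^{-1}, B_1B_2^{-1}, C_1C_2^{-1}$ generate, via an equivalence-relation bookkeeping on pairs of nonseparating curves together with the lantern relation $A_1C_1C_2A_3=A_2D_1D_2$ to manufacture a single honest Dehn twist; Corollaries~\ref{cor:gen1} and~\ref{cor:gen2} then compress this to three elements, the latter producing $F_1=B_1A_2C_3C_4^{-1}A_6^{-1}B_7^{-1}$, chosen so that $\rho_3$ swaps $b_1\leftrightarrow b_7$, $a_2\leftrightarrow a_6$, $c_3\leftrightarrow c_4$. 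That is also where $g\geq 8$ genuinely enters --- one needs enough handles for these six curves to be paired by a single conjugated reflection --- not, as you guessed, from spreading a twist across a filling chain. Your proposal correctly identifies the verification of generation as the delicate part, but that verification (Theorem~\ref{thm:thm4} and its corollaries) is the mathematical content of the result, and the proposal contains no construction that could replace it.
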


After the works of Luo and Brendle-Farb as explained below, 
Kassabov~\cite{Kassabov}  obtained a generating set consisting of
four involutions if $g\geq 7$. He also proved results for lower genus
mapping class groups.

For homological reasons, the groups $\mod(\Sigma_1)$ and $\mod(\Sigma_2)$ 
cannot be generated by involutions. Since there is a surjective 
homomorphism from $\mod(\Sigma_g)$ onto the symplectic group 
$\Sp (2g,\Z)$, 
the latter group  is also generated by three involutions for $g\geq 8$. 

\begin{corollary} 
The symplectic group $\Sp(2g,\Z)$ is generated by 
three involutions if $g\geq 8$, and by four involutions if $g\geq 3$.
\end{corollary}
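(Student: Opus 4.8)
The plan is to deduce the Corollary directly from Theorem~\ref{thm:1} via the standard symplectic representation. First I would recall that the action of $\mod(\Sigma_g)$ on the first homology $H_1(\Sigma_g;\Z)\cong\Z^{2g}$ preserves the algebraic intersection pairing, a nondegenerate alternating form; this yields a homomorphism $\Psi\colon \mod(\Sigma_g)\to\Sp(2g,\Z)$, and it is classical (witnessed, for instance, by the images of Dehn twists along a standard symplectic basis of curves) that $\Psi$ is surjective. This is exactly the surjection invoked in the paragraph preceding the Corollary.

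Next, by Theorem~\ref{thm:1}, for $g\geq 8$ there are involutions $\tau_1,\tau_2,\tau_3$ generating $\mod(\Sigma_g)$. Since $\Psi$ is surjective, the images $\alpha_i=\Psi(\tau_i)$ generate $\Sp(2g,\Z)$, and each satisfies $\alpha_i^2=1$ because $\Psi$ is a homomorphism. The same argument applied to the four involutions furnished by Theorem~\ref{thm:1} handles the range $g\geq 3$. Thus the only thing left to verify is that the $\alpha_i$ are \emph{genuine} involutions rather than degenerate images.

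The one point requiring care — and the only real obstacle — is that $\Psi$ might send an involution to the identity, or might identify two of the $\tau_i$, so that a priori the set $\{\alpha_1,\alpha_2,\alpha_3\}$ could collapse to a generating set with fewer than three genuine involutions. I would rule this out by the same observation used in the introduction for the mapping class group: any group generated by at most two elements of order dividing $2$ is a quotient of the infinite dihedral group $\langle a,b\mid a^2=b^2=1\rangle$, hence virtually cyclic, and so cannot contain a nonabelian free subgroup. Since $\Sp(2g,\Z)$ does contain such a subgroup — for instance inside the embedded copy of $\SL(2,\Z)=\Sp(2,\Z)$, whose commutator subgroup is free of rank two — it cannot be generated by two involutions. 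Therefore none of the $\alpha_i$ can be trivial and no two can coincide, so $\alpha_1,\alpha_2,\alpha_3$ are three distinct nontrivial involutions generating $\Sp(2g,\Z)$ for $g\geq 8$; the four-involution statement for $g\geq 3$ follows identically. This completes the proof.
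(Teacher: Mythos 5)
Your proposal is correct and takes essentially the same route as the paper, which deduces the corollary from Theorem~\ref{thm:1} purely via the surjectivity of the symplectic representation $\mod(\Sigma_g)\to\Sp(2g,\Z)$. Your additional verification that the images are genuine, pairwise distinct involutions --- ruling out degeneration by observing that a group generated by two involutions is a quotient of the infinite dihedral group and hence cannot contain the nonabelian free subgroups that $\Sp(2g,\Z)$ visibly contains inside $\Sp(2,\Z)=\SL(2,\Z)$ --- is sound and simply makes explicit a point the paper leaves implicit.
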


Here is a brief history of the problem of finding the generating sets for $\mod(\Sigma_g)$
with various properties. Dehn~\cite{Dehn}
obtained a generating set for $\mod(\Sigma_g)$ consisting of $2g(g-1)$ Dehn twists. About 
a quarter century later, Lickorish~\cite{Lickorish} 
showed that it can be generated by $3g-1$ Dehn twists, and Humphries~\cite{Humphries} 
reduced this number to $2g+1$. He showed, moreover, that $2g+1$ is minimal: the cardinality of
every generating set of $\mod(\Sigma_g)$ consisting of Dehn twists is at least $2g+1$.  

If one does not require generators to be Dehn twists, by using the generating set 
obtained by Lickorish, it is easy to find a generating set consisting of 
an element of order $g$ and three Dehn twists. It is also possible to get a generating set with three
elements (see Corollary~\ref{cor:gen1} below).
Lu~\cite{Lu} obtained a generating set consisting of three elements, two of which are of finite order. 
A minimal generating set was first  obtained by Wajnryb~\cite{Wajnryb1996} who proved that
$\mod(\Sigma_g)$ can be generated by two elements; one is of order $4g+2$ and the other
is a product of a right and a left Dehn twist about disjoint curves. 
In~\cite{Korkmaz}, we showed that $\mod(\Sigma_g)$ 
is generated by an element or order $4g+2$ and a Dehn twist, improving Wajnryb's result.
 
The interest in the problem of finding generating sets for the mapping class group 
consisting of finite order elements goes back to 1971:
Maclachlan~\cite{Maclachlan} proved that $\mod(\Sigma_g)$ is generated by the conjugates of
two elements of orders $2g+2$ and $4g+2$. He deduced from this
that the moduli space is simply-connected. McCarthy and Papadopoulos~\cite{McCarthyPapadopoulos} showed that 
for $g\geq 3$ the group $\mod(\Sigma_g)$ is generated normally by a single involution; 
i.e., it is generated by an involution and its conjugates. Luo~\cite{Luo}
observed that every Dehn twist is a product of six involutions for $g\geq 3$. It then follows from the fact that
the mapping class group is generated by $2g+1$ Dehn twists,  the group $\mod(\Sigma_g)$ can be
generated by $12g+6$ involutions. Luo also asked whether it is possible to generate the mapping class group
by torsion elements, where the number of generators
is independent of the genus (and boundary components in the case
the surface has boundary).

Brendle and Farb~\cite{BrendleFarb} answered Luo's question affirmatively 
 by proving that $6$ involutions, and also $3$ torsion elements,  
 generate the mapping class group for $g\geq 3$. 
Kassabov~\cite{Kassabov} improved this result further, proving that $\mod(\Sigma_g)$ is generated by 
$4$ (resp. $5$ and $6$) involutions if $g\geq 7$ (resp. $g\geq 5$ and $g\geq 3$). 
 We proved  in~\cite{Korkmaz} 
that the minimal number of torsion generators of 
$\mod(\Sigma_g)$ is $2$, by showing that 
the mapping class group $\mod(\Sigma_g)$ can be generated by 
two elements of order $4g+2$. See also~\cite{Monden},~\cite{Du2} and~\cite{Lanier} 
for generating  
the mapping class group by torsion elements of various orders.

In order to prove the main results of their papers, Brendle-Farb and also Kassabov write
a Dehn twist as a product of four involutions, and use a generating set consisting
of a torsion element and three Dehn twists. It is easier to write a 
product of two opposite Dehn twists about disjoint curves as a product of two involutions. 
Therefore, Wajnryb's generating 
set, consisting of an element of order $4g+2$ and a product of two opposite Dehn twists,
looks like a good candidate to use in order to find a small number of involution generators. 
However, the element of order $4g+2$ cannot be
written as a product of two (orientation--preserving) involutions, because 
it follows from ~\cite{BCGG} that the group $\mod(\Sigma_g)$ 
does not contain a dihedral subgroup of order $8g+4$.
In order to implement this idea, we find new generating sets for the mapping
class group. 
 
\medskip

\noindent
{\bf Acknowledgments.}
I thank UMass-Amherst 
for its generous support and wonderful research environment,
where this research was completed while I was visiting on  leave from Middle East 
Technical University in the academic year 2018--2019.  I thank \.Inan\c{c} Baykur for 
his interest and comments on the paper. 
\bigskip

\section{Background and results on mapping class groups}

In this article we will only consider closed surfaces.
Let $\Sigma_g$ be a closed connected oriented surface of genus $g$ embedded 
in $\R^3$, as illustrated in Figure~\ref{fig1}, in such a way 
that it is invariant under the two rotations 
$\rho_1$ and $\rho_2$; $\rho_1$ is the rotation by $\pi$ about the $z$--axis
and $\rho_2$ is the rotation by $\pi$ about the line $z=-\tan(\pi/g) y$, $x=0$.
The mapping class group $\mod(\Sigma_g)$ of $\Sigma_g$ 
is the group of isotopy classes of orientation--preserving self--diffeomorphisms of $\Sigma_g$. 

Throughout the paper diffeomorphisms are considered up to isotopy. Likewise, curves are considered up
to isotopy. Simple closed curves are denoted by the lowercase letters $a,b,c$, with indices,
while the right Dehn twists about them by the corresponding capital letters $A,B,C$. 
Occasionally, we will write $t_a$ for the right Dehn twist about $a$. 
For the composition of diffeomorphisms, we use the functional notation: 
$fh$ means that $h$ is applied first. The notations $a_i,b_i,c_i$ and $d_1,d_2$
will always  denote the curves shown in Figure~\ref{fig1}.

Let us review the basic relations among Dehn twists that we need below.  
For the proofs the reader is referred to~\cite{FarbMargalit}.
First of all, if $f: \Sigma_g\to \Sigma_g$ is a diffeomorphism
and if $a$ is a simple closed curve on $\Sigma_g$, then $ft_af^{-1}=t_{f(a)}$.

\medskip \noindent
\textbf{Commutativity:}  If $a$ and $b$ are two disjoint simple closed curves on $\Sigma_g$, then $AB=BA$.

\medskip \noindent
\textbf{Lantern relation:}  This relation 
 was discovered by Dehn~\cite{Dehn} in 1930s, and rediscovered and popularized by 
Johnson~\cite{Johnson} in
1979. Suppose that $x_1,x_2,x_3,x_4$ are pairwise disjoint simple closed curves on $\Sigma_g$
bounding a sphere $S$  with  four boundary components. Let us 
choose a point $P_i$ on $x_i$ for each $i=1,2,3,4$. 
For $j=1,2,3$, choose a properly embedded arc $\gamma_j$ on $S$ 
connecting $P_4$ and $P_j$, so that they are disjoint in the interior of $S$. Suppose that 
in a small neighborhood of the point $P_4$, the arcs are read as $\gamma_1,\gamma_2,\gamma_3$  in 
the clockwise order. Let $y_j$ be the boundary component of a regular neighborhood of $x_4\cup\gamma_j\cup x_j$
lying on $S$. Then the Dehn twists about the seven curves 
$x_1,x_2,x_3,x_4,y_1,y_2,y_3$ satisfy the lantern relation 
\[
X_1X_2X_3X_4=Y_1Y_2Y_3.
\]

 It was proved by Dehn~\cite{Dehn} that the mapping class group $\mod(\Sigma_g)$ 
 is generated by Dehn twists about finitely many nonseparating simple closed curves.  
 Lickorish also obtained the same result and showed in~\cite{Lickorish} that it is generated by 
 the Dehn twists 
 \[
 A_1,A_2,\ldots ,A_g, B_1,B_2,\ldots ,B_g,C_1,C_2,\ldots ,C_{g-1}
 \] 
 about the curves shown in Figure~\ref{fig1}. We state this
 fact as a theorem by adding one more Dehn twist for the sake of symmetry.

\begin{theorem} {\rm\bf(Dehn-Lickorish)}\label{thm:Dehn-Lick}
The mapping class group $\mod(\Sigma_g)$ is generated by the set
$\{ A_1,A_2,\ldots ,A_g, B_1,B_2,\ldots ,B_g,C_1,C_2,\ldots ,C_g\}.$
\end{theorem}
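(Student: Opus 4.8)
The plan is to deduce the statement directly from Lickorish's theorem, which has already been recalled in the text: the mapping class group $\mod(\Sigma_g)$ is generated by the $3g-1$ Dehn twists $A_1,A_2,\ldots,A_g,B_1,B_2,\ldots,B_g,C_1,C_2,\ldots,C_{g-1}$ about the curves shown in Figure~\ref{fig1} (see~\cite{Lickorish}). This collection is precisely the proposed generating set with the single twist $C_g$ deleted, so every one of Lickorish's generators already belongs to $\{A_1,\ldots,A_g,B_1,\ldots,B_g,C_1,\ldots,C_g\}$. Consequently the subgroup generated by the larger set contains the subgroup generated by Lickorish's set, which is all of $\mod(\Sigma_g)$. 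Since the larger set in turn consists of genuine elements of $\mod(\Sigma_g)$, the two subgroups coincide and equal the whole group.

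The only point to check is that $C_g=t_{c_g}$ is a bona fide element of $\mod(\Sigma_g)$, which holds because $c_g$ is an embedded simple closed curve on $\Sigma_g$ (displayed in Figure~\ref{fig1}); enlarging a generating set by any such element leaves it a generating set. The role of $C_g$ is therefore cosmetic: it restores the full symmetry of the generating family across the $g$ handles, which is convenient when generators are later conjugated by the rotations $\rho_1$ and $\rho_2$. No further argument is required.

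There is essentially no obstacle at this level, because the genuinely substantive content is Lickorish's original theorem rather than its symmetric restatement. Were one to insist on a self-contained argument avoiding the citation, the real work would be to reprove that the $3g-1$ twists generate $\mod(\Sigma_g)$ — for instance via the surjection onto $\Sp(2g,\Z)$ induced by the action on $H_1(\Sigma_g;\Z)$ together with control of the Torelli kernel, or via Lickorish's inductive geometric argument showing any nonseparating curve can be carried to any other by these twists. Since the theorem is available off the shelf, I would simply invoke it and record the one-line containment above.
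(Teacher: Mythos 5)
Your proposal is correct and matches the paper exactly: the paper also states this theorem as Lickorish's $3g-1$ twist generating set $\{A_1,\ldots,A_g,B_1,\ldots,B_g,C_1,\ldots,C_{g-1}\}$ enlarged by the single redundant twist $C_g$, added purely ``for the sake of symmetry,'' with no further argument given. Your one-line containment observation is precisely the content the paper leaves implicit.
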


As was shown by Humphries~\cite{Humphries}, the Dehn twists $A_3,A_4,\ldots,A_g,C_g$ 
can be removed from the above generating set, leaving $2g+1$ Dehn twists. He also proved
that it is minimal, in the sense that $2g$ or less Dehn twists cannot generate $\mod(\Sigma_g)$
if $g\geq 2$.

As a corollary to Dehn-Lickorish Theorem, it is easy to show that $\mod(\Sigma_g)$
can be generated by four elements, and also by three elements. We state it as the next corollary. 
To this end, let 
$R$ be the rotation by $2\pi/g$ about the $x$--axis represented in Figure~\ref{fig1},
so that $R=\rho_1\rho_2$. Thus, \mbox{$R(a_k)=a_{k+1}$,} $R(b_k)=b_{k+1}$ and
$R(c_k)=c_{k+1}$.

\begin{corollary} \label{cor:Dehn-Lick}
The mapping class group $\mod(\Sigma_g)$ is generated 
\begin{itemize} 
	\item[{\rm (i)}] by the four elements $R, A_1,B_1,C_1$, and also 
	\item[{\rm (ii)}] by the three elements $ R, A_1, A_1B_1C_1$.
\end{itemize}
\end{corollary}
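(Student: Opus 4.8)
The plan is to leverage the Dehn-Lickorish Theorem, which guarantees that the full set $\{A_1,\ldots,A_g,B_1,\ldots,B_g,C_1,\ldots,C_g\}$ generates $\mod(\Sigma_g)$. The central mechanism in both parts is the conjugation action of $R$: since $R(a_k)=a_{k+1}$, $R(b_k)=b_{k+1}$, and $R(c_k)=c_{k+1}$, and since conjugation of a Dehn twist by a diffeomorphism $f$ gives $ft_af^{-1}=t_{f(a)}$, we obtain $RA_kR^{-1}=A_{k+1}$, $RB_kR^{-1}=B_{k+1}$, and $RC_kR^{-1}=C_{k+1}$ (with indices running cyclically as $R$ has order $g$). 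Thus a single seed twist together with $R$ regenerates an entire $R$-orbit of twists.

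For part (i), I would argue that the subgroup $H$ generated by $R, A_1, B_1, C_1$ contains every generator in the Dehn-Lickorish list. Starting from $A_1\in H$ and repeatedly conjugating by $R\in H$, I recover $A_2=RA_1R^{-1}, A_3=RA_2R^{-1},\ldots$, obtaining all of $A_1,\ldots,A_g$; the same argument applied to $B_1$ yields all $B_k$, and applied to $C_1$ yields all $C_k$. Hence $H$ contains the full generating set of Theorem~\ref{thm:Dehn-Lick}, so $H=\mod(\Sigma_g)$.

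For part (ii), the idea is to show that the three elements $R, A_1, A_1B_1C_1$ suffice by recovering $B_1$ and $C_1$ from the given data, thereby reducing to part (i). Let $K=\langle R, A_1, A_1B_1C_1\rangle$. Since $A_1\in K$, we have $A_1^{-1}(A_1B_1C_1)=B_1C_1\in K$. The main point is then to separate $B_1$ from $C_1$. The key observation is that $b_1$ and $c_1$ (and likewise their translates) have specific disjointness/intersection patterns under the $R$-action, so by conjugating $B_1C_1$ by suitable powers of $R$ and multiplying, one produces commuting relations or cancellations that isolate a single twist. Concretely, I would conjugate $B_1C_1$ by powers of $R$ to get various $B_kC_k\in K$, and exploit the commutativity relation (disjoint curves give commuting twists) together with the fact that some $c_k$ is disjoint from $b_1$ while others are not, to solve for $B_1$ or $C_1$ individually inside $K$. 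Once one of $B_1, C_1$ is shown to lie in $K$, the other follows from $B_1C_1\in K$, and then $K$ contains $R, A_1, B_1, C_1$, so $K=\mod(\Sigma_g)$ by part (i).

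The main obstacle I anticipate is precisely this disentangling step in part (ii): extracting the individual twists $B_1$ and $C_1$ from the product $B_1C_1$ and its $R$-conjugates. This requires careful bookkeeping of which curves among $\{b_k\}$ and $\{c_k\}$ are disjoint from which, so that the commutativity relation can be applied to cancel unwanted factors. The geometry of Figure~\ref{fig1}—specifically the intersection pattern of the $b$- and $c$-curves and how the $R$-orbit distributes them—must be used explicitly, and verifying that the resulting word in the $B_kC_k$ collapses to a single Dehn twist is the delicate computation. The first part, by contrast, is essentially immediate from the orbit structure under $R$.
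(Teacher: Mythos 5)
Your part (i) coincides with the paper's argument and is correct. Part (ii), however, contains a genuine gap at exactly the step you flag as delicate: you never actually separate $B_1$ from $C_1$, and the mechanism you propose cannot plausibly do it. Since $b_1$ and $c_1$ intersect once, $B_1$ and $C_1$ do \emph{not} commute, and the commutativity relation for disjoint curves only permits reordering of commuting factors; it supplies no cancellation that would collapse a word in the elements $B_kC_k$ (and their $R$-conjugates) into a single Dehn twist. Extracting a single twist from such composite elements is genuinely hard with disjointness bookkeeping alone --- compare the proof of Theorem~\ref{thm:thm4} in this paper, where the analogous extraction of $A_3$ from the elements $XY^{-1}$ requires the lantern relation, not commutativity. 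So as written, your plan for (ii) is a hope rather than a proof, and the ``careful bookkeeping'' you defer is precisely the missing content.

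The idea you are missing is a one-line conjugation trick, and it is what the paper uses: the element $T=A_1B_1C_1$ acts on the curves themselves by $T(a_1)=b_1$ and $T(b_1)=c_1$. (This follows from the standard fact that $t_xt_y(x)=y$ when $x$ and $y$ intersect once: $C_1$ fixes $a_1$ since $a_1\cap c_1=\emptyset$, and $A_1B_1(a_1)=b_1$; similarly $B_1C_1(b_1)=c_1$ and $A_1$ fixes $c_1$.) Hence, with $K=\langle R, A_1, T\rangle$, the relation $ft_af^{-1}=t_{f(a)}$ gives
\[
TA_1T^{-1}=t_{T(a_1)}=B_1\in K, \qquad TB_1T^{-1}=t_{T(b_1)}=C_1\in K,
\]
so $K$ contains $R,A_1,B_1,C_1$ and part (ii) reduces to part (i) immediately. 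There is no need to form $B_1C_1$ or its $R$-conjugates at all; the product $A_1B_1C_1$ is chosen exactly so that conjugation by it cycles $A_1\mapsto B_1\mapsto C_1$.
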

\begin{proof}
 The claim (i) follows from Theorem~\ref{thm:Dehn-Lick} and the fact that 
 $R^{k}$ conjugates the Dehn twists $A_1,B_1$ and $C_1$ to  
  $A_{k+1}, B_{k+1}$ and $C_{k+1}$, respectively.
 The claim (ii) then follows from (i) together with the fact that $A_1B_1C_1(a_1)=b_1$
 and $A_1B_1C_1(b_1)=c_1$. 
\end{proof}

Although we will not directly use the above corollary, we stated it anyway, because
a version of it will be proved in the next section and will be used in 
our proof of Theorem~\ref{thm:1}. 

\begin{figure}
\begin{tikzpicture}[scale=0.45]
\begin{scope} [xshift=0cm]
 \draw[very thick, violet] (0,0) circle [radius=5.5cm];
 \draw[very thick, violet] (0,2.7) circle [radius=0.8cm]; 
 \draw[very thick, violet, rotate=80] (0,2.7) circle [radius=0.8cm]; 
 \draw[very thick, violet, rotate=-80] (0,2.7) circle [radius=0.8cm]; 
 \draw[very thick, violet, fill ] (0,-2.7) circle [radius=0.03cm]; 
 \draw[very thick, violet, fill, rotate=15] (0,-2.7) circle [radius=0.03cm]; 
\draw[very thick, violet, fill, rotate=-15] (0,-2.7) circle [radius=0.03cm]; 
\draw[thick, green,  rounded corners=10pt] (-0.05, 3.5) ..controls (-0.6,3.8) and (-0.6,5.2).. (-0.05,5.5) ;
\draw[thick, green, dashed, rounded corners=10pt] (0.05,3.5)..controls (0.6,3.8) and (0.6,5.2).. (0.05,5.5) ;
\node at (-1,4.7) {$a_2$};
\draw[thick, green, rotate=80, rounded corners=10pt] 
	(-0.05, 3.5) ..controls (-0.6,3.8) and (-0.6,5.2).. (-0.05,5.5) ;
\draw[thick, green, dashed, rotate=80, rounded corners=10pt] 
	(0.05,3.5)..controls (0.6,3.8) and (0.6,5.2).. (0.05,5.5) ;
\node at (-4.6,-0.1) {$a_1$};
\draw[thick, green, rotate=-80, rounded corners=10pt] 
	(-0.05, 3.5) ..controls (-0.6,3.8) and (-0.6,5.2).. (-0.05,5.5) ;
\draw[thick, green, dashed, rotate=-80, rounded corners=10pt] 
	(0.05,3.5)..controls (0.6,3.8) and (0.6,5.2).. (0.05,5.5) ;
\node at (4.7,-0.1) {$a_3$};
 \draw[thick, blue] (0,2.7) circle [radius=1.1cm]; 
 \draw[thick, rotate=80, blue] (0,2.7) circle [radius=1.1cm]; 
 \draw[thick, blue, rotate=-80] (0,2.7) circle [radius=1.1cm]; 
\node at (-3.5,2) {$b_1$};
\node at (3.2,2) {$b_3$};
\node at (1.4,3.7) {$b_2$};
\draw[thick, red, rounded corners=15pt] (-2.15, 1.05)-- (-1.856,2.233) -- (-0.7,2.28) ;
\draw[thick, red, dashed, rounded corners=15pt] (-2.03, 1)-- (-0.96,1.16) -- (-0.6,2.18) ;
\node at (-1.9,2.4) {$c_1$};
\draw[thick, red, rounded corners=15pt] (2.15, 1.05)-- (1.856,2.233) -- (0.7,2.28) ;
\draw[thick, red, dashed, rounded corners=15pt] (2.03, 1)-- (0.96,1.16) -- (0.6,2.18) ;
\node at (1.9,2.4) {$c_2$};
\draw[thick, red, rotate=80, rounded corners=10pt] (-2.1, 2)-- (-1.5,2.433) -- (-0.7,2.28) ;
\draw[thick, red, dashed, rotate=80, rounded corners=8pt] (-1.3, 1)-- (-0.9,1.36) -- (-0.6,2.18) ;
\node at (-3,-1.4) {$c_g$};
\draw[thick, red, rotate=-80, rounded corners=10pt] (2.1, 2)-- (1.5,2.433) -- (0.7,2.28) ;
\draw[thick, red, dashed, rotate=-80, rounded corners=8pt] (1.3, 1)-- (0.9,1.36) -- (0.6,2.18) ;
\node at (3,-1.4) {$c_3$};
\draw[->, rounded corners=20pt, rotate=-19.5] (0, 6.2)..controls (1.395, 6.039) and (2.716, 5.574).. (3.906,4.817);
\node[rotate=-40] at (4.28,5.284) {$R$};
\draw[->] (-0.6,6.9)..controls (-0.8,6.2) and (0.8,6.2).. (0.6,6.9);
\draw (0,-7) -- (0,-5.6); \draw (0,2) -- (0,3.4); \draw (0,5.6) -- (0,6.25); \draw (0,6.5) -- (0,7.5);
\draw[dotted] (0,-5.3) -- (0,1.85); \draw[dotted] (0,3.7) -- (0,5.3); 
\draw[->, rotate=40] (-0.6,6.9)..controls (-0.8,6.2) and (0.8,6.2).. (0.6,6.9);
\draw[rotate=40]  (0,-7) -- (0,-5.6);  \draw[rotate=40] (0,5.6) -- (0,6.25); \draw[rotate=40] (0,6.5) -- (0,7.5);
\draw[dotted, rotate=40] (0,-5.3) -- (0,5.3); 
\node at (1.4,6.7) {$\rho_1$};
\node at (-5.5,4.8) {$\rho_2$};
\end{scope}
\begin{scope} [xshift=14cm]
 \draw[very thick, violet] (0,0) circle [radius=5.5cm];
 \draw[very thick, violet] (0,2.7) circle [radius=0.8cm]; 
 \draw[very thick, violet, rotate=80] (0,2.7) circle [radius=0.8cm]; 
 \draw[very thick, violet, rotate=-80] (0,2.7) circle [radius=0.8cm]; 
  \draw[very thick, violet, fill ] (0,-2.7) circle [radius=0.03cm]; 
 \draw[very thick, violet, fill, rotate=15] (0,-2.7) circle [radius=0.03cm]; 
\draw[very thick, violet, fill, rotate=-15] (0,-2.7) circle [radius=0.03cm]; 
\draw[thick, blue, rounded corners=5pt] (-2.5, 1.3)--(-2.6, 1.7)..controls (-1.8, 5.3) and (1.8,5.3)..(2.6,1.7)-- (2.5,1.3) ;
\draw[thick, blue, dashed, rounded corners=5pt] (-2.4, 1.3)--(-2.2, 1.7)..controls (-1.8, 4.9) and (1.8,4.9)..(2.2,1.7)-- (2.4,1.3) ;
\draw[thick, red, dashed, rounded corners=15pt] (-2.03, 1)-- (-0.96,1.16) -- (-0.6,2.18) ;
\draw[thick, red, rounded corners=10pt] (-2.1, 1.05)--(-1.8, 1.7)..controls (-4.5,5.6) and (2.3,6.2).. (4.4, 2.3)--(5.15,1.9) ;
\draw[thick, red, rounded corners=6pt] (-0.62, 2.2)--(-1.3,2.5)..controls (-4.1,5.4) and (3,5.7).. (3.5,2)--(3.2,1) ;
\draw[thick, red, dashed, rounded corners=10pt] (3.3, 1)..controls (3.6,0.9) and (4.7,1).. (5.2,1.8) ;
\node at (4.3,3-1) {$d_1$};
\node at (1.5,2.1) {$d_2$};
\node at (0.7, 6.7) {$z$};
\node at (6.7, 0.7) {$y$};
\draw[->, thick] (0,5.8)--(0, 7.3);
\draw[->, thick] (5.8,0)--(7.3,0);
\end{scope}
\end{tikzpicture}
       \caption{The curves $a_i,b_i,c_i,d_i$, the rotation $R$
         and the involutions $\rho_1$ and $\rho_2$ on the surface $\Sigma_g$.}
  \label{fig1}
\end{figure}
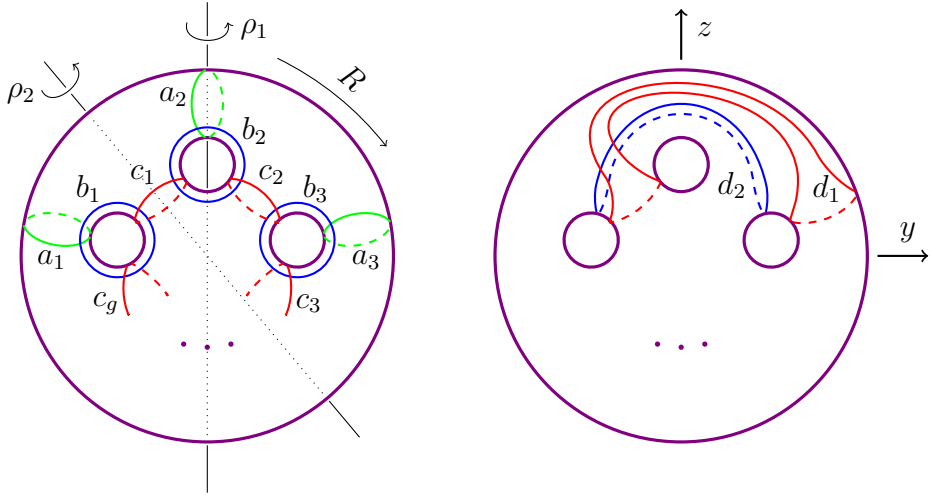

\bigskip


\section{Three new generating sets for $\mod(\Sigma_g)$.}
In this section, we obtain three new generating sets for the mapping class group. The  first one
 will be the first generating set in Corollary~\ref{cor:Dehn-Lick} where
each Dehn  twist is replaced by a product of two opposite Dehn twists.
The new generating sets will allow us to 
generate the mapping class group by a small number of involutions.

Recall that  $R$ denotes the $2\pi/g$--rotation of $\Sigma_g$ represented in
Figure~\ref{fig1}. It is 
 a torsion element of order $g$ in the group $\mod(\Sigma_g)$.  
 Our first generating set is given in the next theorem. For the proof of it,
 following the idea of Wajnryb
 in~\cite{Wajnryb1996},
 we employ the lantern relation. Other generating sets are given as two corollaries
 to the theorem.
 
\begin{theorem} \label{thm:thm4} 
If $g\geq 3$, then the mapping class group $\mod(\Sigma_g)$ is generated by the four elements
\(
R, A_1A_2^{-1},B_1B_2^{-1}, C_1C_2^{-1}. \)
\end{theorem}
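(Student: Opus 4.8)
The plan is to show that the subgroup
\[
G=\langle\, R,\ A_1A_2^{-1},\ B_1B_2^{-1},\ C_1C_2^{-1}\,\rangle
\]
contains the four elements $R,A_1,B_1,C_1$, for then $G=\mod(\Sigma_g)$ by Corollary~\ref{cor:Dehn-Lick}(i). The first step is to fill $G$ with all ``opposite pairs'' of a single type. Since $R(a_k)=a_{k+1}$, $R(b_k)=b_{k+1}$, $R(c_k)=c_{k+1}$ with indices read modulo $g$, conjugation by $R^{k}$ sends $A_1A_2^{-1}$ to $A_{1+k}A_{2+k}^{-1}$, and likewise for the $B$'s and $C$'s; hence $G$ contains $A_iA_{i+1}^{-1}$, $B_iB_{i+1}^{-1}$, $C_iC_{i+1}^{-1}$ for every $i$. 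Telescoping consecutive differences (the intermediate factors simply cancel, $A_iA_{i+1}^{-1}\cdot A_{i+1}A_{i+2}^{-1}=A_iA_{i+2}^{-1}$, and similarly around the cycle) shows that $G$ contains $A_iA_j^{-1}$, $B_iB_j^{-1}$ and $C_iC_j^{-1}$ for all $i,j$. More usefully, for any $\phi\in G$ and any same--type pair of curves $\alpha,\beta$ one has $\phi\,t_\alpha t_\beta^{-1}\phi^{-1}=t_{\phi(\alpha)}t_{\phi(\beta)}^{-1}\in G$.

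The engine that converts such balanced pairs into a single twist is the lantern relation, following Wajnryb. On an embedded four--holed sphere $S$ with lantern relation $X_1X_2X_3X_4=Y_1Y_2Y_3$, the boundary curves $x_1,\dots,x_4$ are pairwise disjoint, and each $y_j$ lies in the interior of $S$ and so is disjoint from every $x_i$; therefore $t_{x_i}$ commutes with $t_{y_j}$ for all $i,j$, while the $t_{y_j}$ need not commute with one another. Using only these commutations, one solves for $X_1$ and rearranges into the telescoped form
\[
X_1=\bigl(Y_1X_2^{-1}\bigr)\bigl(Y_2X_3^{-1}\bigr)\bigl(Y_3X_4^{-1}\bigr).
\]
Thus if the seven curves can be chosen so that each of the three bracketed factors has the shape $t_{\phi(\alpha)}t_{\phi(\beta)}^{-1}$ for some $\phi\in G$ and some same--type standard pair $(\alpha,\beta)$, then the single twist $X_1$ is a product of three elements of $G$, whence $X_1\in G$.

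It therefore remains to exhibit, for $g\geq 3$, lantern configurations built from the curves of Figure~\ref{fig1} and their $R$--translates that realize a single twist of each of the three types in this manner: in each case one wants a four--holed sphere whose distinguished boundary curve $x_1$ is a standard curve (an $a_i$, a $b_j$, or a $c_k$), and whose remaining six curves pair off, as $(y_1,x_2)$, $(y_2,x_3)$, $(y_3,x_4)$, into three same--type pairs. Given single twists $t_{a_i}, t_{b_j}, t_{c_k}\in G$, the differences of the first step immediately upgrade them to $A_1,B_1,C_1\in G$, completing the proof.

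The main obstacle is precisely this geometric bookkeeping. One must locate explicit lanterns, verify that the three leftover pairs are genuinely of a single type (so that their twist--differences are $R$--, or more generally $G$--, conjugates of the given generators), and check that such lanterns embed on $\Sigma_g$ as soon as $g\geq 3$. I expect the cases producing a $b$-- or $c$--twist to be the delicate ones, since there the interior arcs $\gamma_j$ of the lantern must be routed so that the resulting curves $y_j$ land on standard curves of the intended type rather than on generic nonseparating curves; getting all three types simultaneously is what forces the genus bound and is where the real work of the argument lies.
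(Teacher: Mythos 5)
Your skeleton is exactly the paper's: get the same-type differences $A_iA_j^{-1}$, $B_iB_j^{-1}$, $C_iC_j^{-1}$ into $G$ by $R$-conjugation and telescoping, then use a Wajnryb-style lantern to extract a single twist, and your commutation algebra giving $X_1=(Y_1X_2^{-1})(Y_2X_3^{-1})(Y_3X_4^{-1})$ is correct --- this is precisely the form the paper uses, namely $A_3=(A_2C_2^{-1})(D_1A_1^{-1})(D_2C_1^{-1})$ from the lantern $A_1C_1C_2A_3=A_2D_1D_2$ on the four-holed sphere bounded by $a_1,c_1,c_2,a_3$. But the part you defer as ``geometric bookkeeping'' is the actual content of the proof, and the completion you envision cannot work as stated: you hope to route the interior curves so that each $y_j$ lands on a standard curve and each pair $(y_1,x_2),(y_2,x_3),(y_3,x_4)$ is of a single type. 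The three interior curves of any lantern pairwise intersect in two points, while the curves $a_i,b_i,c_i$ of Figure~\ref{fig1} pairwise intersect in at most one point, so at most one $y_j$ can be a standard curve; and indeed in the paper's lantern the three pairs are $(a_2,c_2)$, $(d_1,a_1)$, $(d_2,c_1)$ --- all \emph{cross}-type, two of them involving the non-standard curves $d_1,d_2$ of Figure~\ref{fig1}.

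The two missing ideas that make such factors available are: (i) the cross-type step --- from $A_1A_2^{-1}B_1B_2^{-1}(a_1,a_3)=(b_1,a_3)$ and $B_1B_2^{-1}C_1C_2^{-1}(b_1,a_3)=(c_1,a_3)$ one gets $XY^{-1}\in G$ for \emph{all} $X,Y\in\{A_i,B_i,C_i\}$, not merely same-type differences; and (ii) the device of the $G$-invariant \emph{equivalence relation} $\mathcal{G}=\{(a,b): AB^{-1}\in G\}$ on nonseparating curves, whose transitivity --- which your ``single shape $t_{\phi(\alpha)}t_{\phi(\beta)}^{-1}$'' criterion does not capture --- lets one chain moves: explicit products of cross-type differences, e.g.\ $(B_2A_1^{-1})(C_1A_1^{-1})(A_1A_2^{-1})(C_2A_1^{-1})$, carry $(b_2,a_1)$ to $(d_1,a_1)$ and then onward to $(d_2,a_1)$, whence $D_1A_1^{-1}$ and $D_2C_1^{-1}$ lie in $G$. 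Note also that your allowance of an arbitrary $\phi\in G$ has a bootstrapping problem: producing a $\phi\in G$ carrying a same-type pair onto $(y_j,x_{j+1})$ requires already knowing many elements of $G$, which is exactly what (i) and (ii) supply. With those two ingredients your outline closes up and becomes essentially the paper's proof; without them the lantern step stalls, so as it stands the proposal has a genuine gap rather than a complete argument.
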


\begin{proof}
Let $G$ denote the subgroup of $\mod(\Sigma_g)$
generated by the set 
\[
\{ R, A_1A_2^{-1},B_1B_2^{-1}, C_1C_2^{-1}\}.
\]
 Let $\S$ denote the set of isotopy classes of nonseparating simple 
 closed curves on the surface $\Sigma_g$.
We define a subset $\G$ of $\S\times \S$ as 
\[
\G =\{ (a,b) \ : \   \ A B^{-1}\in G \}.
\]
It is clear that \\
\begin{itemize}
\item (symmetry) if $(a,b)\in \G$ then $(b,a)\in \G$, 
\item (transitivity) if $(a,b)$ and $(b,c)$ are in $\G$ then so is $(a,c)$, and 
\item ($G$--invariance) if $(a,b)\in \G$ and $F\in G$, then $(F(a),F(b))\in \G$.
\end{itemize}
Thus $\G$ is an equivalence relation on $\S$ invariant under the action of $G$. The last property follows from $Ft_at_b^{-1}F^{-1}=t_{F(a)} t_{F(b)}^{-1}$.

Notice that by the very definition of $\G$, the set $\G$ contains 
 the three pairs  $(a_1,a_2), (b_1,b_2)$ and $(c_1,c_2)$.
Since 
\[
R^{k-1}(\alpha_1,\alpha_2)=(\alpha_k,\alpha_{k+1})
\]
 for every $\alpha\in \{a,b,c\}$, the pairs $(a_k,a_{k+1}), (b_k,b_{k+1})$ and $(c_k,c_{k+1})$ are contained in $\G$. 
 It follows from the transitivity that the pairs 
$(a_i,a_{j}), (b_i,b_{j})$ and $ (c_i,c_{j})$ are also contained in $\G$ for all $i,j$. (Here, by abusing the notation
we write $f(a,b)$ for $(f(a),f(b))$, and  
all indices are integers modulo $g$.)

Since
\[
A_1A_2^{-1}B_1B_2^{-1}(a_1,a_3)=(b_1,a_3)
\]
and
\[
B_1B_2^{-1}C_1C_2^{-1}(b_1,a_3)=(c_1,a_3),
\]
we conclude that the pairs $(a_i,b_j), (a_i,c_j), (b_i,c_j)$ are all contained in  the set $\G$ as well. 

As a result of this, we get that if $X,Y\in \{ A_i,B_i,C_i\}$, then 
the mapping class 
$XY^{-1}$ is contained in the subgroup $G$. 

We now use the lantern relation to conclude that $G$ contains one, and hence all, 
of the Dehn twist generators given in Dehn-Lickorish theorem, 
Theorem~\ref{thm:Dehn-Lick}.

It is easy to check that the diffeomorphism 
\[
(B_2A_1^{-1})(C_1A_1^{-1})(A_1A_2^{-1})(C_2A_1^{-1}) 
\]
maps $(b_2,a_1)$ to $(d_1,a_1)$, and the diffeomorphism
\[
(B_3A_1^{-1})(C_2A_1^{-1})(A_3A_1^{-1})(B_3A_1^{-1}) 
\]
maps $(d_1,a_1)$ to $(d_2,a_1)$. Since both of these two 
diffeomorphisms are contained in the subgroup $G$, it follows now from 
the transitivity and the $G$-invariance of $\G$ that  $D_1A_1^{-1}$ and $D_2C_1^{-1}$ are 
contained in $G$. 

Note that $a_1,c_1,c_2,a_3$ bound a sphere with four boundary components.
The Dehn twists about these four curves and about the curves $a_2,d_1,d_2$ 
given in Figure~\ref{fig1}
satisfy the lantern relation 
\[
A_1C_1C_2A_3=A_2D_1D_2,
\]
which may be rewritten as 
\[
A_3=(A_2C_2^{-1})(D_1A_1^{-1})(D_2C_1^{-1}).
\]
Since each factor on the right-hand side is contained in $G$, the 
Dehn twist $A_3$ is also contained in $G$. 
Now from the fact that $A_iA_3^{-1}, B_iA_3^{-1},C_iA_3^{-1}$ are in $G$, 
we conclude that $G$ contains all generators $A_i,B_i$ and $C_i$ of $\mod(\Sigma_g)$
given in Theorem~\ref{thm:Dehn-Lick}.
Consequently, $G=\mod(\Sigma_g)$.

This concludes the proof of the theorem.
\end{proof}

\begin{corollary} \label{cor:gen1} 
If $g\geq 3$, then the mapping class group  $\mod(\Sigma_g)$ is generated by 
the three elements
$R, A_1A_2^{-1}, A_1B_1C_1C_2^{-1}B_3^{-1}A_3^{-1}.$
\end{corollary}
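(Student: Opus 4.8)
Let $G$ denote the subgroup of $\mod(\Sigma_g)$ generated by the three elements $R$, $A_1A_2^{-1}$ and $W:=A_1B_1C_1C_2^{-1}B_3^{-1}A_3^{-1}$. Since $R$ and $A_1A_2^{-1}$ are already two of the four generators in Theorem~\ref{thm:thm4}, the plan is to show that $B_1B_2^{-1}$ and $C_1C_2^{-1}$ also lie in $G$; the conclusion $G=\mod(\Sigma_g)$ is then immediate from Theorem~\ref{thm:thm4}. To organize this I would reuse the machinery of that theorem: introduce the $G$-invariant equivalence relation $\G=\{(a,b)\ :\ AB^{-1}\in G\}$ on the set $\S$ of nonseparating simple closed curves, recording its symmetry, transitivity and $G$-invariance (the last from $Ft_at_b^{-1}F^{-1}=t_{F(a)}t_{F(b)}^{-1}$). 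Exactly as in Theorem~\ref{thm:thm4}, the elements $R$ and $A_1A_2^{-1}$ place every pair $(a_i,a_j)$ into $\G$.

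The role of the single element $W$ is to play the part that $A_1B_1C_1$ plays in Corollary~\ref{cor:Dehn-Lick}(ii). First I would check, from the relations $t_xt_y(x)=y$ for curves meeting once, that $W(a_1)=b_1$ and $W(b_1)=c_1$: the factors $C_2^{-1}B_3^{-1}A_3^{-1}$ fix both $a_1$ and $b_1$, since each of $a_1,b_1$ is disjoint from $a_3,b_3,c_2$, and then $A_1B_1C_1(a_1)=b_1$ and $A_1B_1C_1(b_1)=c_1$ as in that corollary. Next I would feed into the $G$-invariance of $\G$ a pair both of whose entries $W$ sends to named curves. Since the twists appearing in $W$ are about $a_1,b_1,c_1,c_2,b_3,a_3$, any curve $a_j$ disjoint from all six is fixed by $W$, and a direct check of the chain adjacencies shows this happens precisely when $j\not\equiv 1,2,3\pmod g$, for instance $j=4$ when $g\ge 4$. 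Applying $W$ to the pair $(a_1,a_4)\in\G$ then gives $(b_1,a_4)\in\G$, and applying $W$ to $(b_1,a_4)\in\G$ gives $(c_1,a_4)\in\G$; this is the step that crosses from the $a$-curves over to the $b$- and $c$-curves.

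From $(b_1,a_4),(c_1,a_4)\in\G$, transitivity with the already-known pairs $(a_i,a_j)$ yields $(b_1,a_i),(c_1,a_i)\in\G$ for all $i$; applying powers of $R$ and using $G$-invariance spreads these to $(b_j,a_i),(c_j,a_i)\in\G$ for all $i,j$, and one further use of transitivity gives $(b_1,b_2),(c_1,c_2)\in\G$, that is, $B_1B_2^{-1},C_1C_2^{-1}\in G$, finishing the case $g\ge 4$ through Theorem~\ref{thm:thm4}. I expect the genus–three case to be the main obstacle: when $g=3$ every handle carries part of the support of $W$, so no $a_j$ (and apparently no standard nonseparating curve) is fixed by $W$, and the clean ``conjugate a known $a$-pair'' crossing of the previous paragraph is unavailable. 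For $g=3$ I would instead fall back on a direct computation, exhibiting a single cross pair such as $(a_1,b_1)\in\G$, equivalently $A_1B_1^{-1}\in G$, as an explicit word in $R$, $A_1A_2^{-1}$ and $W$ (using the commuting factorization $W=(A_1B_1C_1)(A_3B_3C_2)^{-1}$ and the known relations $A_iA_j^{-1}\in G$); once one cross pair is in $\G$, the propagation above again produces $B_1B_2^{-1}$ and $C_1C_2^{-1}$ and completes the proof.
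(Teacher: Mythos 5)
Your overall route is the paper's own: reduce to Theorem~\ref{thm:thm4} by producing $B_1B_2^{-1}$ and $C_1C_2^{-1}$ from the action of $W=A_1B_1C_1C_2^{-1}B_3^{-1}A_3^{-1}$ on pairs of curves, and your $g\geq 4$ argument via the pair $(a_1,a_4)$ is correct. But your intersection check contains an error that creates a genuine gap at $g=3$. In the configuration of Figure~\ref{fig1}, the curve $a_2$ meets only $b_2$ among all the curves $a_j,b_j,c_j$: the $c_i$ are disjoint from every $a_j$ (this is implicitly used already in the proof of Theorem~\ref{thm:thm4}, e.g.\ in the step $B_1B_2^{-1}C_1C_2^{-1}(b_1,a_3)=(c_1,a_3)$, which requires $a_3$ to be disjoint from $c_1$ and $c_2$; and your own use of $A_1B_1C_1(a_1)=b_1$ requires $c_1$ disjoint from $a_1$, which is inconsistent with your claim that $c_1$ or $c_2$ meets $a_2$). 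Hence $a_2$ is disjoint from all six curves $a_1,b_1,c_1,c_2,b_3,a_3$ in the support of $W$, so $W(a_2)=a_2$; your assertion that $a_j$ is fixed by $W$ precisely when $j\not\equiv 1,2,3 \pmod g$ is false --- the only moved curves are $a_1$ and $a_3$, since $a_j$ can move only if it meets $b_1$ or $b_3$. The paper therefore simply uses the pair $(a_1,a_2)$, valid for every $g\geq 3$: from $W(a_1,a_2)=(b_1,a_2)$, $W(b_1,a_2)=(c_1,a_2)$, $R(a_1,a_2)=(a_2,a_3)$, $R(b_1,a_2)=(b_2,a_3)$ and $R(c_1,a_2)=(c_2,a_3)$ one gets $B_1A_2^{-1}, C_1A_2^{-1}, A_2A_3^{-1}, B_2A_3^{-1}, C_2A_3^{-1}\in G$, and then $B_1B_2^{-1}=(B_1A_2^{-1})(A_2A_3^{-1})(A_3B_2^{-1})$ and $C_1C_2^{-1}=(C_1A_2^{-1})(A_2A_3^{-1})(A_3C_2^{-1})$.

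Because you routed around this nonexistent obstruction, your treatment of $g=3$ is only a promise: you say you ``would fall back on a direct computation'' exhibiting $(a_1,b_1)\in\G$ as an explicit word, but no such word is produced, so the corollary's full range $g\geq 3$ is not established by the proposal as written. The fix is immediate: replace $a_4$ by $a_2$ throughout your second and third paragraphs, after which your argument works uniformly for $g\geq 3$ and collapses to essentially the paper's proof (the paper, incidentally, does not even need the equivalence-relation formalism here --- it just lists the five displayed pair computations above and multiplies the resulting elements).
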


\begin{proof}
Let us denote by
$H$ the subgroup of $\mod(\Sigma_g)$ generated by the set
\[
\{ R, A_1A_2^{-1}, A_1B_1C_1C_2^{-1}B_3^{-1}A_3^{-1}\}.\]
It suffices to prove that $H$ contains $B_1B_2^{-1}$ and
$C_1C_2^{-1}$. 

 It follows from
 \begin{itemize}
   \item $R(a_1,a_2)=(a_2,a_3)$,
   \item $A_1B_1C_1C_2^{-1}B_3^{-1}A_3^{-1}(a_1,a_2)=(b_1,a_2)$,
   \item $A_1B_1C_1C_2^{-1}B_3^{-1}A_3^{-1}(b_1,a_2)=(c_1,a_2)$,
   \item $R(b_1,a_2)=(b_2,a_3)$ and
   \item $R(c_1,a_2)=(c_2,a_3)$
\end{itemize}
 that the elements
 \begin{itemize}
   \item $A_2A_3^{-1}$,
   \item $B_1A_2^{-1}$,
   \item $C_1A_2^{-1}$.
    \item $B_2A_3^{-1} $ and
   \item $C_2A_3^{-1}$
\end{itemize}
are contained in $H$. Now we have
 \begin{itemize}
   \item[$\bullet$] $B_1B_2^{-1}= B_1A_2^{-1}\cdot A_2A_3^{-1}\cdot A_3B_2^{-1}\in H$ and
    \item[$\bullet$] $C_1C_2^{-1}= C_1A_2^{-1}\cdot A_2A_3^{-1}\cdot A_3C_2^{-1}\in H$.
\end{itemize}
It follows from  Theorem~\ref{thm:thm4} that
$H=\mod(\Sigma_g)$, completing the proof of the corollary.
 \end{proof}

\begin{corollary} \label{cor:gen2} 
If $g\geq 8$, then the mapping class group  $\mod(\Sigma_g)$ is generated by 
the there elements
$\rho_1,\rho_2$ and $B_1A_2C_{3} C_4^{-1}A_6^{-1}B_7^{-1}.$
\end{corollary}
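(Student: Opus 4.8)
The plan is to run the same argument as in the proof of Theorem~\ref{thm:thm4}. Write $W=B_1A_2C_3C_4^{-1}A_6^{-1}B_7^{-1}$ and let $K$ be the subgroup generated by $\rho_1,\rho_2,W$. Since $R=\rho_1\rho_2$, the group $K$ already contains the rotation $R$ and hence every power $R^k$. Reintroducing the $K$-invariant equivalence relation $\G=\{(x,y):XY^{-1}\in K\}$ on nonseparating simple closed curves, Theorem~\ref{thm:thm4} tells us that it is enough to prove that the three pairs $(a_1,a_2)$, $(b_1,b_2)$, $(c_1,c_2)$ lie in $\G$; for then $A_1A_2^{-1},B_1B_2^{-1},C_1C_2^{-1}\in K$ and $K=\mod(\Sigma_g)$.

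First I would record two structural facts. Reading off Figure~\ref{fig1}, the involution $\rho_1$ reflects the indices while fixing the handle carrying $a_2$, so that $\rho_1(a_k)=a_{4-k}$, $\rho_1(b_k)=b_{4-k}$, $\rho_1(c_k)=c_{3-k}$, and $\rho_2=\rho_1R$ acts by the corresponding reflection. Second, the six curves $b_1,a_2,c_3,c_4,a_6,b_7$ are pairwise disjoint: in this curve system the only intersections are $a_i\cap b_i$, $c_i\cap b_i$ and $c_i\cap b_{i+1}$, and none of these occur among our six indices. Consequently $W$ is a multitwist, and since the twists commute it factors as $W=(B_1B_7^{-1})(A_2A_6^{-1})(C_3C_4^{-1})$, a product of three pair-differences about disjoint curves. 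In particular, as soon as one of these three factors is shown to lie in $K$, the corresponding pair, say $(c_3,c_4)$, lies in $\G$.

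The bulk of the argument is then the propagation, which mirrors Theorem~\ref{thm:thm4}. Once a single pair such as $(c_3,c_4)$ is in $\G$, applying $R$ gives all $(c_i,c_{i+1})$ and, by transitivity, all $(c_i,c_j)$; applying the genuinely mixing class $W$ (which, unlike $R$ and the $\rho_i$, does not preserve the families $\{a_i\},\{b_i\},\{c_i\}$) together with $\rho_1,\rho_2$ and using $K$-invariance and transitivity spreads these to the $a$- and $b$-pairs and to the mixed pairs. With $\G$ populated, the very lantern relation $A_1C_1C_2A_3=A_2D_1D_2$ used in Theorem~\ref{thm:thm4} exhibits a Dehn--Lickorish generator as a product of pair-differences in $K$, whence $K$ contains all generators of Theorem~\ref{thm:Dehn-Lick} and $K=\mod(\Sigma_g)$.

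The hard part, and the real content of the corollary, is the seeding step: producing a single one of the three pair-differences inside $K$. This cannot be done by commutation alone, since multiplying $W$ with its conjugates $R^kWR^{-k}$ or with $\rho_iW\rho_i$ only cancels a curve or two while importing six new twists, so the number of twists never drops to two. The extraction must therefore feed overlapping conjugates of $W$ into the braid and lantern relations, collapsing a product of twists down to a single pair-difference; this is where I expect the \emph{main difficulty} to lie. The hypothesis $g\ge 8$ enters precisely here: it guarantees that the six curves, whose indices $1,2,3,4,6,7$ leave a gap at $5$, stay spread out and pairwise disjoint with no wrap-around, leaving enough room on the surface to set up the auxiliary relation that isolates one pair.
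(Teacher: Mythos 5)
Your frame is the right one, and it matches the paper's: you set $K=\langle \rho_1,\rho_2,W\rangle$, observe $R=\rho_1\rho_2\in K$, and reduce via Theorem~\ref{thm:thm4} to showing $A_1A_2^{-1}$, $B_1B_2^{-1}$, $C_1C_2^{-1}\in K$; your observation that the six curves $b_1,a_2,c_3,c_4,a_6,b_7$ are pairwise disjoint, so that $W=(B_1B_7^{-1})(A_2A_6^{-1})(C_3C_4^{-1})$ is a multitwist, is correct and is used implicitly throughout the paper's argument. But you then stop exactly where the proof has to start: you never exhibit a single two-twist element of $K$, explicitly setting aside ``the seeding step'' as the expected main difficulty, and your guess that it requires feeding conjugates of $W$ into braid and lantern relations points the wrong way --- the paper's proof of this corollary uses neither (the lantern relation only enters inside the black box of Theorem~\ref{thm:thm4}, which you already invoke). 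So the proposal has a genuine gap: the corollary is not proved.

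Moreover, your heuristic for why the seed is hard --- that multiplying $W$ by its conjugates $R^kWR^{-k}$ or $\rho_iW\rho_i$ ``only cancels a curve or two while importing six new twists, so the number of twists never drops to two'' --- is precisely where you went astray, because it ignores conjugating conjugates of $W$ by \emph{other elements of $K$ built from $W$ itself}. That operation keeps the length-six multitwist shape but changes the support curves: with $F_1=W$ and $F_2=RF_1R^{-1}=B_2A_3C_4C_5^{-1}A_7^{-1}B_8^{-1}$, the element $F_2F_1\in K$ maps the ordered support $(b_2,a_3,c_4,c_5,a_7,b_8)$ of $F_2$ to $(a_2,a_3,c_4,c_5,b_7,b_8)$ with matching signs, so $(F_2F_1)F_2(F_2F_1)^{-1}=A_2A_3C_4C_5^{-1}B_7^{-1}B_8^{-1}=F_3\in K$. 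One more round, with $F_4=R^{-1}F_3R=A_1A_2C_3C_4^{-1}B_6^{-1}B_7^{-1}$ and conjugation by $F_4F_3$, produces $F_5=A_1A_2C_3C_4^{-1}C_5^{-1}B_7^{-1}\in K$, which shares five of its six commuting factors with $F_4$; hence $F_4^{-1}F_5=B_6C_5^{-1}\in K$, and six twists collapse to two in a single multiplication --- exactly the collapse you declared impossible. Note also that the seed is a \emph{mixed} pair, not a same-letter pair like your $(c_3,c_4)$: conjugating by powers of $R$ and by $\rho_2$ gives $B_{i+1}C_i^{-1}$ and $B_iC_i^{-1}$, whence $B_1B_2^{-1}=(B_1C_1^{-1})(C_1B_2^{-1})$ and $C_1C_2^{-1}=(C_1B_2^{-1})(B_2C_2^{-1})$; a second round of the same trick, starting from $F_6=F_1(C_3^{-1}C_4)(B_7C_6^{-1})=B_1A_2A_6^{-1}C_6^{-1}$, yields $A_2B_2^{-1}$ and then $A_1A_2^{-1}=(A_1B_1^{-1})(B_1B_2^{-1})(B_2A_2^{-1})$. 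Your reading of the hypothesis $g\geq 8$ is roughly right: it is needed so that the shifted conjugate $F_2$ involves $b_8$ without wrap-around collisions, keeping the displayed actions on curve systems as computed.
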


\begin{figure}
\begin{tikzpicture}[scale=0.75]
\begin{scope} [xshift=0cm, yshift=0cm]
 \draw[very thick, violet] (0,0) circle [radius=2.5cm];
 \draw[very thick, violet] (0,1.6) circle [radius=0.2cm]; 
 \draw[very thick, violet, rotate=45] (0,1.6) circle [radius=0.2cm]; 
 \draw[very thick, violet, rotate=90] (0,1.6) circle [radius=0.2cm];  
 \draw[very thick, violet, rotate=135] (0,1.6) circle [radius=0.2cm];  
 \draw[very thick, violet, rotate=180] (0,1.6) circle [radius=0.2cm]; 
 \draw[very thick, violet, rotate=-45] (0,1.6) circle [radius=0.2cm]; 
 \draw[very thick, violet, rotate=-90] (0,1.6) circle [radius=0.2cm]; 
 \draw[very thick, violet, rotate=-135] (0,1.6) circle [radius=0.2cm]; 
 \draw[thick, blue, rotate=-90, rounded corners=8pt] (-0.94,1.17)--(-0.48, 1.27)--(-0.18,1.5);  
 \draw[thick, blue, dashed,  rotate=-90, rounded corners=8pt] (-0.94,1.21)--(-0.6, 1.44)--(-0.2,1.54);  
 \draw[thick, red, rotate=-135, rounded corners=8pt] (-0.94,1.17)--(-0.48, 1.27)--(-0.18,1.5);  
 \draw[thick, red, dashed,  rotate=-135, rounded corners=8pt] (-0.94,1.21)--(-0.6, 1.44)--(-0.2,1.54);  
 \draw[thick, blue, rotate=45] (0,1.6) circle [radius=0.3cm]; 
\draw[thick, red, rotate=135] (0,1.6) circle [radius=0.3cm]; 
 \draw[thick, blue, rotate=0, rounded corners=8pt] (-0.02,1.8)--(-0.1, 2.15)--(-0.02,2.5);  
 \draw[thick, blue, dashed, rotate=0, rounded corners=8pt]  (0.02,1.8)--(0.1, 2.15)--(0.02,2.5); 
 \draw[thick, dashed, red, rotate=-180, rounded corners=8pt] (-0.02,1.8)--(-0.1, 2.15)--(-0.02,2.5);  
 \draw[thick, red, rotate=-180, rounded corners=8pt]  (0.02,1.8)--(0.1, 2.15)--(0.02,2.5); 
  \node[scale=0.6, blue] at (-0.7,0.8) {$+$};
  \node[scale=0.6, blue] at (0.02,1.15) {$+$};
  \node[scale=0.6, blue] at (1.1,0.5) {$+$};
  \node[scale=0.6, red] at (1.1,-0.5) {$-$};
  \node[scale=0.6, red] at (0.02,-1.15) {$-$};
  \node[scale=0.6, red] at (-0.7,-0.8) {$-$};
\end{scope}

\begin{scope} [xshift=5.8cm, yshift=0cm, rotate=-45]
 \draw[very thick, violet] (0,0) circle [radius=2.5cm];
 \draw[very thick, violet] (0,1.6) circle [radius=0.2cm]; 
 \draw[very thick, violet, rotate=45] (0,1.6) circle [radius=0.2cm]; 
 \draw[very thick, violet, rotate=90] (0,1.6) circle [radius=0.2cm];  
 \draw[very thick, violet, rotate=135] (0,1.6) circle [radius=0.2cm];  
 \draw[very thick, violet, rotate=180] (0,1.6) circle [radius=0.2cm]; 
 \draw[very thick, violet, rotate=-45] (0,1.6) circle [radius=0.2cm]; 
 \draw[very thick, violet, rotate=-90] (0,1.6) circle [radius=0.2cm]; 
 \draw[very thick, violet, rotate=-135] (0,1.6) circle [radius=0.2cm]; 
 \draw[thick, blue, rotate=-90, rounded corners=8pt] (-0.94,1.17)--(-0.48, 1.27)--(-0.18,1.5);  
 \draw[thick, blue, dashed,  rotate=-90, rounded corners=8pt] (-0.94,1.21)--(-0.6, 1.44)--(-0.2,1.54);  
 \draw[thick, red, rotate=-135, rounded corners=8pt] (-0.94,1.17)--(-0.48, 1.27)--(-0.18,1.5);  
 \draw[thick, red, dashed,  rotate=-135, rounded corners=8pt] (-0.94,1.21)--(-0.6, 1.44)--(-0.2,1.54);  
 \draw[thick, blue, rotate=45] (0,1.6) circle [radius=0.3cm]; 
\draw[thick, red, rotate=135] (0,1.6) circle [radius=0.3cm]; 
 \draw[thick, blue, rotate=0, rounded corners=8pt] (-0.02,1.8)--(-0.1, 2.15)--(-0.02,2.5);  
 \draw[thick, blue, dashed, rotate=0, rounded corners=8pt]  (0.02,1.8)--(0.1, 2.15)--(0.02,2.5); 
 \draw[thick, dashed, red, rotate=-180, rounded corners=8pt] (-0.02,1.8)--(-0.1, 2.15)--(-0.02,2.5);  
 \draw[thick, red, rotate=-180, rounded corners=8pt]  (0.02,1.8)--(0.1, 2.15)--(0.02,2.5); 
  \node[scale=0.6, blue] at (-0.7,0.8) {$+$};
  \node[scale=0.6, blue] at (0.02,1.15) {$+$};
  \node[scale=0.6, blue] at (1.1,0.5) {$+$};
  \node[scale=0.6, red] at (1.1,-0.5) {$-$};
  \node[scale=0.6, red] at (0.02,-1.15) {$-$};
  \node[scale=0.6, red] at (-0.7,-0.8) {$-$};
\end{scope}

\begin{scope} [xshift=11.6cm, yshift=0cm, rotate=-45]
 \draw[very thick, violet] (0,0) circle [radius=2.5cm];
 \draw[very thick, violet] (0,1.6) circle [radius=0.2cm]; 
 \draw[very thick, violet, rotate=45] (0,1.6) circle [radius=0.2cm]; 
 \draw[very thick, violet, rotate=90] (0,1.6) circle [radius=0.2cm];  
 \draw[very thick, violet, rotate=135] (0,1.6) circle [radius=0.2cm];  
 \draw[very thick, violet, rotate=180] (0,1.6) circle [radius=0.2cm]; 
 \draw[very thick, violet, rotate=-45] (0,1.6) circle [radius=0.2cm]; 
 \draw[very thick, violet, rotate=-90] (0,1.6) circle [radius=0.2cm]; 
 \draw[very thick, violet, rotate=-135] (0,1.6) circle [radius=0.2cm]; 
 \draw[thick, blue, rotate=-90, rounded corners=8pt] (-0.94,1.17)--(-0.48, 1.27)--(-0.18,1.5);  
 \draw[thick, blue, dashed,  rotate=-90, rounded corners=8pt] (-0.94,1.21)--(-0.6, 1.44)--(-0.2,1.54);  
 \draw[thick, red, rotate=-135, rounded corners=8pt] (-0.94,1.17)--(-0.48, 1.27)--(-0.18,1.5);  
 \draw[thick, red, dashed,  rotate=-135, rounded corners=8pt] (-0.94,1.21)--(-0.6, 1.44)--(-0.2,1.54);  
 \draw[thick, red, rotate=180] (0,1.6) circle [radius=0.3cm]; 
\draw[thick, red, rotate=135] (0,1.6) circle [radius=0.3cm]; 
 \draw[thick, blue, rotate=0, rounded corners=8pt] (-0.02,1.8)--(-0.1, 2.15)--(-0.02,2.5);  
 \draw[thick, blue, dashed, rotate=0, rounded corners=8pt]  (0.02,1.8)--(0.1, 2.15)--(0.02,2.5); 
 \draw[thick, dashed, blue, rotate=45, rounded corners=8pt] (-0.02,1.8)--(-0.1, 2.15)--(-0.02,2.5);  
 \draw[thick, blue, rotate=45, rounded corners=8pt]  (0.02,1.8)--(0.1, 2.15)--(0.02,2.5); 
  \node[scale=0.6, blue] at (-0.7,0.8) {$+$};
  \node[scale=0.6, blue] at (0.02,1.15) {$+$};
  \node[scale=0.6, blue] at (1.1,0.5) {$+$};
  \node[scale=0.6, red] at (1.1,-0.5) {$-$};
  \node[scale=0.6, red] at (0.02,-1.15) {$-$};
  \node[scale=0.6, red] at (-0.7,-0.8) {$-$};
\end{scope}

\begin{scope} [xshift=8.7cm, yshift=-5.6cm, rotate=0]
 \draw[very thick, violet] (0,0) circle [radius=2.5cm];
 \draw[very thick, violet] (0,1.6) circle [radius=0.2cm]; 
 \draw[very thick, violet, rotate=45] (0,1.6) circle [radius=0.2cm]; 
 \draw[very thick, violet, rotate=90] (0,1.6) circle [radius=0.2cm];  
 \draw[very thick, violet, rotate=135] (0,1.6) circle [radius=0.2cm];  
 \draw[very thick, violet, rotate=180] (0,1.6) circle [radius=0.2cm]; 
 \draw[very thick, violet, rotate=-45] (0,1.6) circle [radius=0.2cm]; 
 \draw[very thick, violet, rotate=-90] (0,1.6) circle [radius=0.2cm]; 
 \draw[very thick, violet, rotate=-135] (0,1.6) circle [radius=0.2cm]; 
 \draw[thick, blue, rotate=-90, rounded corners=8pt] (-0.94,1.17)--(-0.48, 1.27)--(-0.18,1.5);  
 \draw[thick, blue, dashed,  rotate=-90, rounded corners=8pt] (-0.94,1.21)--(-0.6, 1.44)--(-0.2,1.54);  
 \draw[thick, red, rotate=-135, rounded corners=8pt] (-0.94,1.17)--(-0.48, 1.27)--(-0.18,1.5);  
 \draw[thick, red, dashed,  rotate=-135, rounded corners=8pt] (-0.94,1.21)--(-0.6, 1.44)--(-0.2,1.54);  
 \draw[thick, red, rotate=180] (0,1.6) circle [radius=0.3cm]; 
\draw[thick, red, rotate=135] (0,1.6) circle [radius=0.3cm]; 
 \draw[thick, blue, rotate=0, rounded corners=8pt] (-0.02,1.8)--(-0.1, 2.15)--(-0.02,2.5);  
 \draw[thick, blue, dashed, rotate=0, rounded corners=8pt]  (0.02,1.8)--(0.1, 2.15)--(0.02,2.5); 
 \draw[thick, dashed, blue, rotate=45, rounded corners=8pt] (-0.02,1.8)--(-0.1, 2.15)--(-0.02,2.5);  
 \draw[thick, blue, rotate=45, rounded corners=8pt]  (0.02,1.8)--(0.1, 2.15)--(0.02,2.5); 
  \node[scale=0.6, blue] at (-0.7,0.8) {$+$};
  \node[scale=0.6, blue] at (0.02,1.15) {$+$};
  \node[scale=0.6, blue] at (1.1,0.5) {$+$};
  \node[scale=0.6, red] at (1.1,-0.5) {$-$};
  \node[scale=0.6, red] at (0.02,-1.15) {$-$};
  \node[scale=0.6, red] at (-0.7,-0.8) {$-$};
\end{scope}

\begin{scope} [xshift=2.9cm, yshift=-5.6cm, rotate=0]
 \draw[very thick, violet] (0,0) circle [radius=2.5cm];
 \draw[very thick, violet] (0,1.6) circle [radius=0.2cm]; 
 \draw[very thick, violet, rotate=45] (0,1.6) circle [radius=0.2cm]; 
 \draw[very thick, violet, rotate=90] (0,1.6) circle [radius=0.2cm];  
 \draw[very thick, violet, rotate=135] (0,1.6) circle [radius=0.2cm];  
 \draw[very thick, violet, rotate=180] (0,1.6) circle [radius=0.2cm]; 
 \draw[very thick, violet, rotate=-45] (0,1.6) circle [radius=0.2cm]; 
 \draw[very thick, violet, rotate=-90] (0,1.6) circle [radius=0.2cm]; 
 \draw[very thick, violet, rotate=-135] (0,1.6) circle [radius=0.2cm]; 
 \draw[thick, blue, rotate=-90, rounded corners=8pt] (-0.94,1.17)--(-0.48, 1.27)--(-0.18,1.5);  
 \draw[thick, blue, dashed,  rotate=-90, rounded corners=8pt] (-0.94,1.21)--(-0.6, 1.44)--(-0.2,1.54);  
 \draw[thick, red, rotate=-135, rounded corners=8pt] (-0.94,1.17)--(-0.48, 1.27)--(-0.18,1.5);  
 \draw[thick, red, dashed,  rotate=-135, rounded corners=8pt] (-0.94,1.21)--(-0.6, 1.44)--(-0.2,1.54);  
\draw[thick, red, rotate=-180, rounded corners=8pt] (-0.94,1.17)--(-0.48, 1.27)--(-0.18,1.5);  
 \draw[thick, red, dashed,  rotate=-180, rounded corners=8pt] (-0.94,1.21)--(-0.6, 1.44)--(-0.2,1.54);  
\draw[thick, red, rotate=135] (0,1.6) circle [radius=0.3cm]; 
 \draw[thick, blue, rotate=0, rounded corners=8pt] (-0.02,1.8)--(-0.1, 2.15)--(-0.02,2.5);  
 \draw[thick, blue, dashed, rotate=0, rounded corners=8pt]  (0.02,1.8)--(0.1, 2.15)--(0.02,2.5); 
 \draw[thick, dashed, blue, rotate=45, rounded corners=8pt] (-0.02,1.8)--(-0.1, 2.15)--(-0.02,2.5);  
 \draw[thick, blue, rotate=45, rounded corners=8pt]  (0.02,1.8)--(0.1, 2.15)--(0.02,2.5); 
  \node[scale=0.6, blue] at (-0.7,0.8) {$+$};
  \node[scale=0.6, blue] at (0.02,1.15) {$+$};
  \node[scale=0.6, blue] at (1.1,0.5) {$+$};
  \node[scale=0.6, red] at (1.1,-0.5) {$-$};
  \node[scale=0.6, red] at (0.45,-1.11) {$-$};
  \node[scale=0.6, red] at (-0.7,-0.8) {$-$};
\end{scope}

\begin{scope} [xshift=0cm, yshift=-11.2cm, rotate=0]
 \draw[very thick, violet] (0,0) circle [radius=2.5cm];
 \draw[very thick, violet] (0,1.6) circle [radius=0.2cm]; 
 \draw[very thick, violet, rotate=45] (0,1.6) circle [radius=0.2cm]; 
 \draw[very thick, violet, rotate=90] (0,1.6) circle [radius=0.2cm];  
 \draw[very thick, violet, rotate=135] (0,1.6) circle [radius=0.2cm];  
 \draw[very thick, violet, rotate=180] (0,1.6) circle [radius=0.2cm]; 
 \draw[very thick, violet, rotate=-45] (0,1.6) circle [radius=0.2cm]; 
 \draw[very thick, violet, rotate=-90] (0,1.6) circle [radius=0.2cm]; 
 \draw[very thick, violet, rotate=-135] (0,1.6) circle [radius=0.2cm]; 
 \draw[thick, red, rotate=135, rounded corners=8pt] (-0.94,1.17)--(-0.48, 1.27)--(-0.18,1.5);  
 \draw[thick, red, dashed,  rotate=135, rounded corners=8pt] (-0.94,1.21)--(-0.6, 1.44)--(-0.2,1.54);  
\draw[thick, blue, rotate=45] (0,1.6) circle [radius=0.3cm]; 
 \draw[thick, blue, rotate=0, rounded corners=8pt] (-0.02,1.8)--(-0.1, 2.15)--(-0.02,2.5);  
 \draw[thick, blue, dashed, rotate=0, rounded corners=8pt]  (0.02,1.8)--(0.1, 2.15)--(0.02,2.5); 
\draw[thick, red, dashed, rotate=180, rounded corners=8pt] (-0.02,1.8)--(-0.1, 2.15)--(-0.02,2.5);  
 \draw[thick, red, rotate=180, rounded corners=8pt]  (0.02,1.8)--(0.1, 2.15)--(0.02,2.5); 
  \node[scale=0.6, blue] at (-0.7,0.8) {$+$};
  \node[scale=0.6, blue] at (0.02,1.15) {$+$};
  \node[scale=0.6, red] at (0,-1.2) {$-$};
  \node[scale=0.6, red] at (-0.4,-1) {$-$};
\end{scope}

\begin{scope} [xshift=5.8cm, yshift=-11.2cm, rotate=-45]
 \draw[very thick, violet] (0,0) circle [radius=2.5cm];
 \draw[very thick, violet] (0,1.6) circle [radius=0.2cm]; 
 \draw[very thick, violet, rotate=45] (0,1.6) circle [radius=0.2cm]; 
 \draw[very thick, violet, rotate=90] (0,1.6) circle [radius=0.2cm];  
 \draw[very thick, violet, rotate=135] (0,1.6) circle [radius=0.2cm];  
 \draw[very thick, violet, rotate=180] (0,1.6) circle [radius=0.2cm]; 
 \draw[very thick, violet, rotate=-45] (0,1.6) circle [radius=0.2cm]; 
 \draw[very thick, violet, rotate=-90] (0,1.6) circle [radius=0.2cm]; 
 \draw[very thick, violet, rotate=-135] (0,1.6) circle [radius=0.2cm]; 
 \draw[thick, red, rotate=135, rounded corners=8pt] (-0.94,1.17)--(-0.48, 1.27)--(-0.18,1.5);  
 \draw[thick, red, dashed,  rotate=135, rounded corners=8pt] (-0.94,1.21)--(-0.6, 1.44)--(-0.2,1.54);  
\draw[thick, blue, rotate=45] (0,1.6) circle [radius=0.3cm]; 
 \draw[thick, blue, rotate=0, rounded corners=8pt] (-0.02,1.8)--(-0.1, 2.15)--(-0.02,2.5);  
 \draw[thick, blue, dashed, rotate=0, rounded corners=8pt]  (0.02,1.8)--(0.1, 2.15)--(0.02,2.5); 
\draw[thick, red, dashed, rotate=180, rounded corners=8pt] (-0.02,1.8)--(-0.1, 2.15)--(-0.02,2.5);  
 \draw[thick, red, rotate=180, rounded corners=8pt]  (0.02,1.8)--(0.1, 2.15)--(0.02,2.5); 
  \node[scale=0.6, blue] at (-0.7,0.8) {$+$};
  \node[scale=0.6, blue] at (0.02,1.15) {$+$};
  \node[scale=0.6, red] at (0,-1.2) {$-$};
  \node[scale=0.6, red] at (-0.4,-1) {$-$};
\end{scope}

\begin{scope} [xshift=11.6cm, yshift=-11.2cm, rotate=-45]
 \draw[very thick, violet] (0,0) circle [radius=2.5cm];
 \draw[very thick, violet] (0,1.6) circle [radius=0.2cm]; 
 \draw[very thick, violet, rotate=45] (0,1.6) circle [radius=0.2cm]; 
 \draw[very thick, violet, rotate=90] (0,1.6) circle [radius=0.2cm];  
 \draw[very thick, violet, rotate=135] (0,1.6) circle [radius=0.2cm];  
 \draw[very thick, violet, rotate=180] (0,1.6) circle [radius=0.2cm]; 
 \draw[very thick, violet, rotate=-45] (0,1.6) circle [radius=0.2cm]; 
 \draw[very thick, violet, rotate=-90] (0,1.6) circle [radius=0.2cm]; 
 \draw[very thick, violet, rotate=-135] (0,1.6) circle [radius=0.2cm]; 
 \draw[thick, red, rotate=135, rounded corners=8pt] (-0.94,1.17)--(-0.48, 1.27)--(-0.18,1.5);  
 \draw[thick, red, dashed,  rotate=135, rounded corners=8pt] (-0.94,1.21)--(-0.6, 1.44)--(-0.2,1.54);  
 
 \draw[thick, blue, rotate=45, rounded corners=8pt] (-0.02,1.8)--(-0.1, 2.15)--(-0.02,2.5);  
 \draw[thick, blue, dashed, rotate=45, rounded corners=8pt]  (0.02,1.8)--(0.1, 2.15)--(0.02,2.5); 

 \draw[thick, blue, rotate=0, rounded corners=8pt] (-0.02,1.8)--(-0.1, 2.15)--(-0.02,2.5);  
 \draw[thick, blue, dashed, rotate=0, rounded corners=8pt]  (0.02,1.8)--(0.1, 2.15)--(0.02,2.5); 

\draw[thick, red, dashed, rotate=180, rounded corners=8pt] (-0.02,1.8)--(-0.1, 2.15)--(-0.02,2.5);  
 \draw[thick, red, rotate=180, rounded corners=8pt]  (0.02,1.8)--(0.1, 2.15)--(0.02,2.5); 
   \node[scale=0.6, blue] at (-0.7,0.8) {$+$};
  \node[scale=0.6, blue] at (0.02,1.15) {$+$};
  \node[scale=0.6, red] at (0,-1.2) {$-$};
  \node[scale=0.6, red] at (-0.4,-1) {$-$};
\end{scope}
	 \node[scale=0.8] at (-1.5,2.6) {$F_1$};
 	\node[scale=0.8] at (4.3,2.6) {$F_2$};
 	\node[scale=0.8] at (10.1,2.6) {$F_3$};
 	\node[scale=0.8] at (7.2,-3.1) {$F_4$};
 	\node[scale=0.8] at (1.4,-3.1) {$F_5$};
 	\node[scale=0.8] at (-1.5,-8.7) {$F_6$};
 	\node[scale=0.8] at (4.3,-8.7) {$F_7$};
 	\node[scale=0.8] at (10.1,-8.7) {$F_8$};
 	\draw[ ->, rounded corners=5pt] (2.4,1.5)--(2.9,1.6)-- (3.4,1.5);
 	\node[scale=0.8] at (2.9,2) {$R$};
 	\draw[ ->, rounded corners=5pt] (8.2,1.5)--(8.7,1.6)-- (9.2,1.5);
 	\node[scale=0.8] at (8.7,2) {$F_2F_1$};
 	\draw[ ->, rounded corners=5pt] (11.1,-2.7)--(11, -3.3)--(10.7,-3.7);
 	\node[scale=0.8] at (11.6,-3.2) {$R^{-1}$};
 	\draw[ <-, rounded corners=5pt] (5.3,-4.3)--(5.8,-4.2)-- (6.3,-4.3);
 	\node[scale=0.8] at (5.8,-3.8) {$F_4F_3$};
 	\draw[ ->, rounded corners=5pt] (2.4,-9.8)--(2.9,-9.7)-- (3.4,-9.8);
 	\node[scale=0.8] at (2.9,-9.4) {$R$};
 	\draw[ ->, rounded corners=5pt] (8.2,-9.8)--(8.7,-9.7)-- (9.2,-9.8);
 	\node[scale=0.8] at (8.7,-9.4) {$F_7F_6$};
\end{tikzpicture}
 	\caption{Proof of Corollary~\ref{cor:gen2} for $g=8$.}
\end{figure}
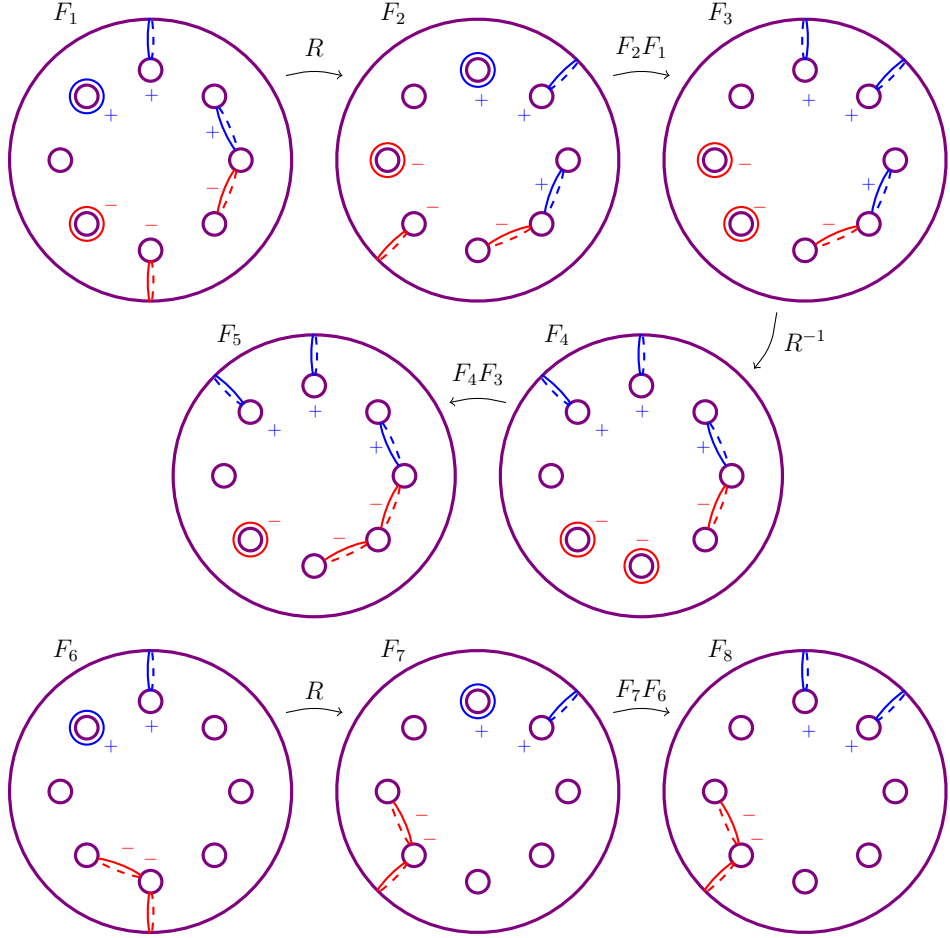

\begin{proof}
Let $F_1= B_1A_2C_{3} C_4^{-1}A_6^{-1}B_7^{-1}$ and 
let $N$ denote the subgroup of $\mod(\Sigma_g)$ generated by the set
 \[
 \{ \rho_1,\rho_2,F_1\}.\]
 Then the rotation $R=\rho_1\rho_2$ is in $N$.
By Theorem~\ref{thm:thm4}, it suffices to prove that $N$ contains $A_1A_2^{-1}$, $B_1B_2^{-1}$ and $C_1C_2^{-1}$. 

Let $F_2$ denote the conjugation of $F_1$ by $R$:
\[
F_2 = RF_1R^{-1}=B_2A_3C_{4} C_5^{-1}A_7^{-1}B_8^{-1}.
\]
It is easy to show that
\[
F_2F_1(b_{2},a_{3},c_4,c_5,a_7,b_8)= (a_{2},a_{3},c_4,c_5,b_7,b_8),
\]
so that 
$F_3= A_{2}A_{3}C_4C_5^{-1}B_7^{-1}B_8^{-1}$ is contained in $N$. 

Now let 
\[ 
F_4=R^{-1}F_3R=A_{1}A_{2}C_3C_4^{-1}B_6^{-1}B_7^{-1}.
\]
It can also be shown that
\[ 
F_4F_3 (a_{1},a_{2},c_3,c_4,b_6,b_7)=(a_{1},a_{2},c_3,c_4,c_5,b_7).
\]
Hence, the subgroup $N$ contains the element 
\[
F_5=A_{1}A_{2}C_3C_4^{-1}C_5^{-1}B_7^{-1}.
\]

Therefore, 
\[
F_4^{-1}F_5=B_6C_5^{-1}\in N.
\]
By conjugating this element by powers of $R$ we see that $N$ contains the elements 
$B_{i+1}C_i^{-1}$. In particular, $B_2C_1^{-1}\in N$, and hence  
$\rho_2(B_2C_1^{-1})\rho_2=B_1C_1^{-1}\in N$. It follows that
$B_iC_i^{-1}\in N$ for all $i$. Now, we have
\begin{itemize}
   \item[$\bullet$] $B_1B_2^{-1}= (B_1C_1^{-1}) (C_1B_2^{-1})\in N$, and
    \item[$\bullet$] $C_1C_2^{-1}= (C_1B_2^{-1}) (B_2C_2^{-1})\in N$.
\end{itemize}
It remains to show that $A_1A_2^{-1}\in N$.

Let 
\begin{eqnarray*}
F_6
&=&F_1(C_3^{-1}C_4)(B_7C_6^{-1})\\
&=& B_1A_2A_6^{-1} C_6^{-1}
\end{eqnarray*}
and let
\begin{eqnarray*}
F_7
&=&RF_6R^{-1}\\
&=& B_2A_3A_7^{-1} C_7^{-1}.
\end{eqnarray*}
It can be verified that
\[
F_7F_6(b_2,a_3,a_7,c_7)=(a_2,a_3,a_7,c_7)
\]
so that $N$ contains the element
\[
F_8= A_2A_3A_7^{-1} C_7^{-1}.
\]
Thus we get that
\[
F_8F_7^{-1}=A_2B_2^{-1}
\]
is in $N$. By conjugating this with powers of $R$ we see that
$A_iB_i^{-1}$ is in $N$ for all $i$.
Therefore
\begin{itemize}
\item[$\bullet$] $A_1A_2^{-1}=(A_1B_1^{-1})(B_1B_2^{-1})(B_2A_2^{-1})$
\end{itemize}
is contained in $N$. Consequently,  $N=\mod(\Sigma_g)$. 

This  completes the proof of the corollary.
 \end{proof}

\bigskip

\section{Involution generators}
Recall that an involution in a group $G$ is an element of order $2$.
If $\rho$ is an involution in $G$ conjugating $x$ to $y$,  then
\[
(\rho xy^{-1})^2=\rho xy^{-1}\rho xy^{-1}=y x^{-1} xy^{-1}=1.
\]
 Thus we have the following elementary but useful lemma.

\begin{lemma} \label{lem:invol}
If $\rho$ is an involution in a group $G$ and if $x$ and $y$ are 
elements in $G$ satisfying $\rho x \rho=y$, then $\rho xy^{-1}$ is an involution.
\end{lemma}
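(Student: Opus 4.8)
The plan is to verify directly that $\rho x y^{-1}$ squares to the identity, which (since its order must then divide two) is exactly what it means to be an involution. The whole argument is a one-line algebraic manipulation inside $G$, driven entirely by the relation $\rho^2 = 1$, so I expect no real obstacle beyond careful bookkeeping with the conjugation; in particular nothing beyond group axioms and the single hypothesis $\rho x \rho = y$ is needed.

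First I would record the consequence of $\rho$ being an involution: $\rho = \rho^{-1}$, so conjugation by $\rho$ is an involutive automorphism of $G$. Applying this automorphism to the hypothesis $\rho x \rho = y$ and using $\rho^2 = 1$ yields the symmetric relation $\rho y \rho = x$, and inverting both sides produces the companion identity $\rho y^{-1} \rho = x^{-1}$, which is the only auxiliary fact I will use.

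Next I would expand the square and collapse it. Writing
\[
(\rho x y^{-1})^2 = (\rho x y^{-1}\rho)(x y^{-1}),
\]
I would insert $\rho\rho = 1$ between $x$ and $y^{-1}$ to split the inner conjugation cleanly:
\[
\rho x y^{-1}\rho = (\rho x \rho)(\rho y^{-1}\rho) = y\, x^{-1}.
\]
Substituting back gives $(\rho x y^{-1})^2 = y x^{-1} x y^{-1} = 1$, completing the proof. The only place calling for even minor care is that middle insertion of $\rho\rho$, which lets the conjugation distribute across the product $x y^{-1}$; once $\rho = \rho^{-1}$ and $\rho y^{-1}\rho = x^{-1}$ are in hand, every remaining cancellation is immediate.
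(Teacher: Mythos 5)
Your proof is correct and follows essentially the same route as the paper's one-line computation: both arguments square $\rho x y^{-1}$ and use $\rho x \rho = y$ (equivalently $\rho y^{-1}\rho = x^{-1}$) to collapse $\rho x y^{-1}\rho$ to $yx^{-1}$. Your version merely makes explicit the insertion of $\rho\rho = 1$ and the auxiliary identity $\rho y^{-1}\rho = x^{-1}$, which the paper leaves implicit.
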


Consider now the surface $\Sigma_g$ of genus $g\geq 3$.
Since $\rho_2(a_1)=a_2$,  $\rho_1(a_1)=a_3$, $\rho_1(b_1)=b_3$ and
$\rho_1(c_1)=c_2$, we have
\[
\rho_2 A_1\rho_2=A_2 \ \mbox{ and }\ \rho_1 A_1B_1C_1 \rho_1= A_3B_3C_2.
\]
 It follows now from Lemma~\ref{lem:invol} that 
\[
\rho_2 A_1A_2^{-1} \ \mbox{ and } \  \rho_1 A_1B_1C_1 C_2^{-1}B_3^{-1}A_3^{-1}
\]
are involutions. 

Let $\rho_3=R^2\rho_1R^{-2}$.  For $g\geq 8$, we have
\[
\rho_3 B_{1}A_2C_3\rho_3= B_7A_6C_4,
\]
so that
$\rho_3 B_{1}A_2C_3 C_4^{-1} A_6^{-1} B_7^{-1} $ is an involution.

Finally, we state our main result.

\begin{theorem} \label{thm:thm7} 
Let $\Sigma_g$ be the closed connected oriented surface of genus $g$.
Then the mapping class group $\mod(\Sigma_g)$ is generated by the involutions
 \begin{itemize}
   \item[$(1)$]  $\rho_1,\rho_2$ and $\rho_3 B_{1}A_2C_3 C_4^{-1} A_6^{-1} B_7^{-1} $ if $g\geq 8$, and
   \item[$(2)$] 
   $\rho_1, \rho_2, \rho_2A_1A_2^{-1}, \rho_1A_1B_1C_1C_2^{-1}B_3^{-1}A_3^{-1}$
   if  $g\geq 3$.
\end{itemize}
\end{theorem}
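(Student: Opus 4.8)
The plan is to deduce the theorem almost immediately from Corollaries~\ref{cor:gen1} and~\ref{cor:gen2}, together with the involution identities recorded in the paragraph just above the statement. The single guiding observation I would use is elementary: if $\rho$ is an involution lying in a subgroup $G$ and the prescribed generator has the shape $\rho w$ for some product $w$ of Dehn twists, then the ``multiplier'' $w=\rho(\rho w)$ automatically lies in $G$, since $\rho^2=1$. So each mixed involution $\rho w$ on the two lists contributes its Dehn-twist part $w$ to the subgroup it generates, and the only thing to check is that these parts, together with $R=\rho_1\rho_2$, reproduce the generating sets already proved to work.

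For part~(1), I would let $G$ denote the subgroup generated by $\rho_1,\rho_2$ and $\sigma:=\rho_3 B_1A_2C_3C_4^{-1}A_6^{-1}B_7^{-1}$. First $R=\rho_1\rho_2\in G$, and since $\rho_3=R^2\rho_1R^{-2}$ is a word in $\rho_1,\rho_2$, also $\rho_3\in G$. Applying the guiding observation with $\rho=\rho_3$ then recovers $F_1=B_1A_2C_3C_4^{-1}A_6^{-1}B_7^{-1}=\rho_3\sigma\in G$. Thus $G$ contains $\rho_1,\rho_2$ and $F_1$, and Corollary~\ref{cor:gen2}, valid for $g\geq 8$, forces $G=\mod(\Sigma_g)$.

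For part~(2), I would let $G$ denote the subgroup generated by $\rho_1,\rho_2,\rho_2A_1A_2^{-1}$ and $\rho_1A_1B_1C_1C_2^{-1}B_3^{-1}A_3^{-1}$. Again $R=\rho_1\rho_2\in G$. Applying the guiding observation with $\rho=\rho_2$ gives $A_1A_2^{-1}=\rho_2(\rho_2A_1A_2^{-1})\in G$, and with $\rho=\rho_1$ gives $A_1B_1C_1C_2^{-1}B_3^{-1}A_3^{-1}=\rho_1(\rho_1A_1B_1C_1C_2^{-1}B_3^{-1}A_3^{-1})\in G$. Hence $G$ contains the three elements $R$, $A_1A_2^{-1}$ and $A_1B_1C_1C_2^{-1}B_3^{-1}A_3^{-1}$, so Corollary~\ref{cor:gen1}, valid for $g\geq 3$, yields $G=\mod(\Sigma_g)$.

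The only substantive thing left to supply is that the four prescribed elements really are involutions, and this is precisely what the paragraph preceding the statement does, via Lemma~\ref{lem:invol} and the conjugation identities $\rho_2A_1\rho_2=A_2$, $\rho_1A_1B_1C_1\rho_1=A_3B_3C_2$ and $\rho_3B_1A_2C_3\rho_3=B_7A_6C_4$. So I do not expect a real obstacle at this stage: all the weight of the argument has already been spent in Corollaries~\ref{cor:gen1} and~\ref{cor:gen2}, where the lantern relation and the tracking of how products of opposite Dehn twists act on the curves $a_i,b_i,c_i,d_i$ were carried out. The present theorem is then a short repackaging of those generating sets into involutions, and the proof should read off in a few lines.
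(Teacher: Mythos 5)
Your proposal is correct and takes essentially the same route as the paper, whose entire proof is the one-line observation that the theorem ``follows at once from Corollary~\ref{cor:gen2}, Corollary~\ref{cor:gen1} and the fact that $R=\rho_1\rho_2$.'' Your elaboration---stripping the involution factor via $w=\rho(\rho w)$, recovering $\rho_3=R^2\rho_1R^{-2}$ inside the subgroup, and citing the pre-theorem paragraph with Lemma~\ref{lem:invol} for the involution checks---just makes explicit exactly what the paper leaves implicit.
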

\begin{proof}
The proof follows at once from Corollary~\ref{cor:gen2}, Corollary~\ref{cor:gen1} and the fact that $R=\rho_1\rho_2$.
\end{proof}

Since the first homology groups of $\mod(\Sigma_1)$ and $\mod(\Sigma_2)$
are isomorphic to the cyclic group $\Z_{12}$ and $\Z_{10}$ respectively, 
these two mapping class groups cannot be generated by involutions. In fact,
the group $\mod(\Sigma_1)$ is isomorphic to $\SL (2,\Z)$ and 
$ -I$ is the only element of order two in  $\SL (2,\Z)$, where $I$ denotes 
the identity matrix.
It was shown by Stukow~\cite{Stukow2} that the subgroup of $\mod(\Sigma_2)$
generated involutions is of index five. 



%
%
%
%

\end{document}